\declaretheorem[name=Theorem,numberwithin=section]{thm}
\declaretheorem[name=Lemma,sibling=thm]{lemma}
\declaretheorem[name=Corollary,sibling=thm]{cor}
\declaretheorem[name=Definition, style=definition,sibling=thm]{defn}
\declaretheorem[name=Remark,style=remark,sibling=thm]{rmk}
\setlist[enumerate]{itemsep=0mm}
\newcommand{\A}{\mathcal{A}}
\newcommand{\AU}{\mathcal{A}_\infty(U)} 
\newcommand{\Ak}{\A_\infty^{\oplus k}}
\newcommand{\ana}{\mathcal{C}^\omega} 
\newcommand{\anak}[1]{\mathcal{C}^\omega(#1)^{\oplus k}} 
\newcommand{\vuk}{\anak{V}\cap\Ak(U)} 
\newcommand{\cont}[1]{\mathcal{C}(\overline{#1},\mathbb{R})} 
\newcommand{\contk}[1]{\mathcal{C}(\overline{#1},\mathbb{R})^{\oplus k}} 
\newcommand*\diff{\mathop{}\!\mathrm{d}}   
\newcommand{\mE}{\mathbb{E}}  
\newcommand{\cE}{\mathcal{E}} 
\newcommand{\bbf}{\mathbf{f}} 
\newcommand{\fa}{f^{(\alpha)}} 
\newcommand{\bbfa}{\bbf^{(\alpha)}} 
\newcommand{\G}{\mathcal{G}} 
\newcommand{\bg}{\mathbf{g}}
\newcommand{\HU}{\mathcal{H}(U)} 
\newcommand{\id}{\operatorname{id}}  
\newcommand{\imag}{\mathrm{i}}  
\newcommand{\im}{\operatorname{im}}  
\newcommand{\cL}{\mathcal{L}}  
\newcommand{\M}{\mathcal{M}}
\newcommand{\tnu}{\tilde{\nu}} 
\newcommand{\bone}{\mathbf{1}} 
\newcommand{\ok}{\mathbf{1}^{\oplus k}} 
\newcommand{\om}{\omega}
\newcommand{\mP}{\mathbb{P}} 
\newcommand{\probsp}{(\Omega,\mathcal{F},\mP)} 
\newcommand{\cP}{\mathcal{P}} 
\newcommand{\proj}[1]{\mathbb{R}P^{#1}}  
\newcommand{\posq}{\mathbb{R}^d_+} 
\newcommand{\real}{\operatorname{Re}} 
\newcommand{\cS}{\mathcal{S}}  
\newcommand{\tr}{\operatorname{tr}} 
\newcommand{\bw}{\mathbf{w}} 
\newcommand{\dhat}[1]{\widehat{#1}}  
\newcommand{\oso}{{\sigma\omega'=\omega}}
\newcommand{\cond}{\, \bigm| \,}
\DeclareFontFamily{U}{matha}{\hyphenchar\font45}
\DeclareFontShape{U}{matha}{m}{n}{
	<5> <6> <7> <8> <9> <10> gen * matha
	<10.95> matha10 <12> <14.4> <17.28> <20.74> <24.88> matha12
}{}
\DeclareSymbolFont{matha}{U}{matha}{m}{n}
\DeclareFontFamily{U}{mathx}{\hyphenchar\font45}
\DeclareFontShape{U}{mathx}{m}{n}{
	<5> <6> <7> <8> <9> <10>
	<10.95> <12> <14.4> <17.28> <20.74> <24.88>
	mathx10
}{}
\DeclareSymbolFont{mathx}{U}{mathx}{m}{n}
\DeclareMathDelimiter{\vvvert}{0}{matha}{"7E}{mathx}{"17}
\title{Bai-Pollicott Algorithm for Markovian Products of Positive Matrices}
\author{Fan Wang\thanks{Department of Statistics, University of Oxford. Email: \href{mailto:wangfan.ox@gmail.com}{wangfan.ox@gmail.com}}  \and David Steinsaltz\thanks{Department of Statistics, University of Oxford. Email: \href{mailto:steinsal@stats.ox.ac.uk}{steinsal@stats.ox.ac.uk} }}
\date{} 
\begin{document}
	\maketitle
\begin{abstract}
	We consider the problem of estimating the top Lyapunov exponents for Markovian products of positive matrices.
	We define a new transfer operator as a matrix of classical transfer operators and prove its spectral properties. With the spectral properties, we generalize (and provide a more theoretically rigorous foundation for) an algorithm that was introduced informally by Bai \cite{Bai07} and formally by Pollicott \cite{Pollicott} based on dynamical zeta functions.
\end{abstract}

\section{Introduction}
\subsection{Background}
Let $A_1(\om), A_2(\om),\dots$ be a stationary sequence of random matrices taking values in 
$ GL(d,\mathbb{R}) $, satisfying the \emph{finite expectation condition} ${\mE[\sup\{\log\|A(\om)\|,0\}]<\infty}$. Denoting by
\[
{ S_n(\om):=A_n(\om)\dots A_1(\om)},
\] 
Fekete's sub-additivity lemma \cite{Fekete} implies that 
\begin{equation}\label{eq:defn.le}
\gamma:=\lim_{n\to\infty}\dfrac{1}{n}\mE[\log\|S_n(\om)\|]
\end{equation}
exists and takes values in $ \mathbb{R}\cup\{-\infty\} $. Moreover, because matrix norms are equivalent, the limit does not depend on the choice of $ \| \cdot \| $. This limit was called the \emph{Lyapunov exponent} associated with the random matrix products and is of great importance, as it describes the stability of a system.

The asymptotic behaviour of non-commutative products has been studied extensively over the past 60 years. 
The first significant result is the famous 1960 Furstenberg--Kesten Theorem  \cite{Furstenberg1}, stating that, under finite log expectation condition, 
\[
	\lim_{n\to\infty}\dfrac{1}{n}\log\|S_n(\om)\| \ \text{ exists and equals } \gamma, \quad \mathbb{P} \text{-a.s.}
\]
 Furstenberg and Kifer \cite{Furstenberg2,Furstenberg3} showed that when the matrix sequence is i.i.d. and when $ x $ is a non-zero vector in $ \mathbb{R}^d $, under the finite expectation condition and the irreducibility condition
 \begin{equation}\label{eq:Sn.x} 
 	\lim_{n\to\infty}\dfrac{1}{n}\log\|S_n(\om)x\| \ \text{ exists and equals } \gamma, \quad \mathbb{P} \text{-a.s.},
 \end{equation}
and the Lyapunov exponent can be written as
 \[ 
 	\gamma=\iint \log\|A(\om)x\|\mP(\diff\om)\nu(\diff x), 
 \]
 where $ \nu $ is an  invariant measure supported on the projective space $  \mathbb{R}P^{d-1} $ with respect to $ \mP $. That is, $\nu$ satisfies
 \[ 
 	\iint f\left(\dfrac{A(\om)x}{\| A(\om)x \|}\right)\mP(\diff\om)\nu(\diff x)=\int f(x)\nu(\diff x), 
 \]
 for any Borel bounded function $ f $ on the projective space $\mathbb{R}P^{d-1}  $.
 This closed-form representation turns out, unfortunately, not to be of much use for
 actually evaluating the Lyapunov exponent, as an invariant measure can be explicitly computed only
 in exceptional cases \cite{rarecases}. 
 
 In 2010 Pollicott \cite{Pollicott} proposed an alternative algorithm  for computing
 Lyapunov exponents when the matrices are positive and the matrix sequence is i.i.d., based on
 Ruelle's theory of transfer operators \cite{Ruelle} and Grothendieck's classic work on nuclear operators \cite{Grothendieck1}. And this method is proved to be more efficient than classical algorithms such as weak disorder expansion \cite{wk.exp} and cycle expansion \cite{cyc.exp}.
In fact, this algorithm has already appeared earlier in 2007 in an informal form introduced by Bai \cite{Bai07}. In this article we refer to this algorithm as Bai-Pollicott algorithm.
 
 Starting from a probability measure $\mu=\sum_{i=1}^m p_i\delta_{M_i}$
 on a finite set $\{M_1,M_2,\dots, M_m\}$ of positive matrices, he defined an associated
 family of transfer operators $\cL_t$ on a space $\G$ of complex-valued
 functions on an appropriate subset of $\proj{d-1}$ by the expression 
 \begin{equation}\label{eq:firstLt}
 	(\cL_tf)(x)=\sum_{i=1}^m p_i e^{t \log\|M_i x\|}f\left(\dfrac{M_ix}{\|M_i x\|}\right),\quad f\in\G,
 \end{equation}
where $t$ is a real perturbation parameter and $\mE$ is the expectation with respect to 
the random choice of $\omega$. It can be shown that $\cL_t$ possesses an isolated and simple top eigenvalue $\lambda(t)$, and that $\lambda'(0)=\gamma$. This transforms the problem of computing the
Lyapunov exponent into a question about the top eigenvalue of the transfer operator $\cL_t$, a question that is
more amenable to analytic methods. 

After choosing $\G = \A_\infty(U)$ where $\A_\infty(U)$ denotes the set of complex analytic function on a properly chosen domain $U$ with continuous extension to the boundary,  it is proved in \cite{Ruelle} that these $\cL_t$ are trace-class operators, thus it is possible to define the determinant function 
\[
	d(z,t):=\det(\mathcal{I}-z\cL_t),
\]
and so to obtain the top eigenvalue of $\cL_t$ by calculating the largest zero of $d(z,t)$. 
If we expand the analytic function $d(z,t)$ in powers of $z$
\begin{equation}\label{eq:det.fn.exp}
	d(z,t)=1+\sum_{i=1}^\infty a_i(t)z ^i,
\end{equation}
and truncate to the first $p$ terms in \cref{eq:det.fn.exp}, we obtain estimates
for $\lambda'(0)$, hence for the Lyapunov exponent
\begin{equation}\label{eq:Pollicott.est}
	\gamma^{(p)}=\dfrac{\sum_{i=1}^p a_i'(0)}{\sum_{i=1}^p i a_i(0)}.
\end{equation} 
Pollicott shows, in addition, that $\gamma^{(n)}$ converges to the Lyapunov exponent $\gamma$ as $n\to\infty$, and that $|\gamma^{(n)}-\gamma|\sim O(r^{n^{1+1/(d-1)}})$ for some $0<r<1$ \cite{Pollicott,Jenkinson}.

It is worth mentioning that there are a few small gaps 
in Pollicott's original paper (pointed out by Ian Morris or by various anonymous referees): 
\begin{itemize}
\item the choice of the function space $\mathcal{G}$, or more precisely, the existence of the domain $U$ on which $\mathcal{G}$ can be defined, is not clearly stated for the case of dimension $d\ge 3$;
\item the explicit formula for the determinant function (Lemma 4.2 in \cite{Pollicott}) does not hold when $ d \ge 3 $; 
\item because of missing details in the proof of the Perron--Frobenius--Ruelle Theorem in \cite{Pollicott}, concerns have been raised, such as why it is possible to apply Grothendieck trace formula when the chosen space $\G=\A_\infty(U)$ is not known to satisfy the approximation property (that is, informally speaking, the identity map on $\G$ might not be approximated by linear operators of finite rank on some compact subset of $\G$); why the eigenmeasure $\nu$ obtained from the dual operator of the transfer operator acting on the space of bounded analytic functions in the proof of Lemma 3.1 in \cite{Pollicott}, while being a functional on the space of complex analytic functions, restricts to a probability measure; and how to extend the real function $f(x) = \log(\|Ax\|/\|x\|)$ from a real argument to a complex argument in such a way that $f(x)\in \A_\infty(U)$.
\end{itemize}
Moreover, we remark here that our definition of $\cL_t$ given by \cref{eq:firstLt} differs slightly from Pollicott's original in how it specifies the exponential weights.  
The weight functions given in \cite{Pollicott} would force us to average $k$ separate calculations when working with a set of $k$ matrices;
our choice of weight function reduces this to a single calculation. Another advantage of our version is that, from our final formulas \eqref{eq:an0} - \eqref{eq:tr/dtr}, we can see that one only need to calculate the eigenvalues for each finite-length product while in \cite{Pollicott}, computations for the eigenvectors are necessary.

Because of these small errors in Pollicott's paper, some researchers (Ian Morris and various anonymous referees) have gained the impression that his approach is fundamentally flawed.
In the present work we repair these issues in the course of generalizing 
Bai-Pollicott algorithm to the setting of matrix products that are Markovian rather than i.i.d. As the present paper does not directly cite the results of \cite{Pollicott} in our main theorems, but only imitates its methods, one of the contributions of the present paper is to provide a more secure foundation for Bai-Pollicott algorithm, which we also generalize. Most of the results have appeared in the first author's dissertation \cite{FanThesis}.

It is also worth noticing that Ian Morris \cite{Mor22dim, Mor22gle} gives an alternative way of studying transfer operators by considering them acting on the Bergman space $\mathcal{G} = \mathcal{A}^2(U)$ where $\mathcal{A}^2(U)$ is defined to be the set of square-integrable holomorphic functions on $U$ with respect to the Lebesgue measure for a properly chosen domain $U$.

\subsection{Basic definitions and structure of the paper}

We posit a finite set $\{M_i \}_{1\le i \le k}$ of positive matrices in $GL(d,\mathbb{R})$, a $k\times k$ stochastic matrix $P=(p_{ij})_{1\le i, j\le k}$, and an initial probability vector $\mathbf{p_0}$. 
Then on the probability space $\probsp$ of shift over $k$ symbols, we can construct a Markov chain $A(\om)$ of random matrices such that 
\[
	\mP\bigl( A(\sigma\om)=M_j \cond A(\om)=M_i \bigr)=p_{ij},\quad 1\le i,j\le k,
\]
where $\sigma$ denotes the shift map on $\Omega$.
In words, if the current matrix is $M_i$, then we have probability $p_{ij}$ to choose $M_j$ to be the next matrix. In this article, we always assume the transition matrix $P$ is strictly positive. 
We write $A_n(\om):=A(\sigma^n\om), \ n\ge 1$.

Subadditivity implies that the Lyapunov exponent $\gamma$ associated with this problem,
defined by
\[
	\gamma:=\lim_{n\to\infty}\dfrac{1}{n}\mE[\log\|S_n(\om)\|], 
\]
exists, and takes values in $\{-\infty\}\cup\mathbb{R}$, where $S_n(\om):=A_n(\om)A_{n-1}(\om)\cdots A_1(\om)$. The sub-additive ergodic theorem implies that the limit
\[
	\lim_{n\to\infty} \dfrac{1}{n}\log\|A_n(\om)A_{n-1}(\om)\cdots A_1(\om)\|
\]
exists $\mP$-almost surely, and equals the Lyapunov exponent when the finite expectation condition
$\mE[\log^+ \|A(\om)\|]<\infty$ holds. 

The next three sections will lay out the definitions and fundamental properties
for the transfer operators. \Cref{sec:proj.act} aims at proving the existence of
a uniformly contracting domain, stated as \autoref{cor:choice.nbhd.complex}. This is required for
defining the function space $ \mathcal{G} $. \Cref{sec:mkv.to} defines the basic transfer operators, including the description of the relevant
function spaces, leading up to the crucial spectral results, most importantly a Perron--Frobenius--Ruelle theorem, collected in \autoref{thm:spec.M}. These results are then
extended in \Cref{sec:ana.perturb.to} to the parametrised family of transfer operators, with the corresponding results in \autoref{thm:ana.perturb.Lt}. The extension of Bai-Pollicott algorithm can then
be stated and proved in \Cref{sec:alg}.

The present article describes the framework for generalizing Bai-Pollicott algorithm to Markovian sequences of positive matrices.
One might wish for a more full generalization of Bai-Pollicott algorithm, to Markovian sequences of
random invertible (rather than positive) matrices. 
It is not clear what form such a generalization might take.
In the subsequent article \cite{TO}, we consider Markovian products of general invertible matrices,
but in that general setting there is no formula of equivalent simplicity to Bai-Pollicott algorithm.
What is proved there is that, if the matrix set satisfies the strong irreducibility and the contracting property, then after properly choosing the new function space $\mathcal{G}$ defined on $\Omega^+\times\proj{d-1}$ (where $\Omega^+$ denotes the one-sided full shift space)  and defining the corresponding transfer operator in the following form
$$(\cL_g w)(\omega, x)=\sum_{\oso} p(\omega, \omega') e^{g(\omega', M(\omega')\cdot x)}w(\omega', M(\omega')\cdot x), \quad w\in\mathcal{G},$$
the spectral properties still hold. That is, when $g(\omega,x)=-t\log\|M(\omega)x\| \ (t\in\mathbb{C})$ the transfer operator $\cL_g$ has an isolated and simple top eigenvalue $\lambda(t)$, and $\lambda'(0)=\gamma$. 

What we elucidate in the present paper is the crucial role played by the uniformly contracting property of positive matrices (see \autoref{cor:choice.nbhd.complex}) in
turning this general spectral representation into a practical algorithm, as Pollicott did. We rely on defining the transfer operators on complex analytic functions on a specific domain and in proving the nuclearity of the transfer operators \cite{Ruelle}. However, in the more general case considered in \cite{TO}, without the uniformly contracting property,
we have to rely upon the ``contracting-in-average" property, which is insufficient to imply
that the corresponding transfer operators are nuclear. This prevents us from developing an analogue
of Bai-Pollicott algorithm in the more general setting of \cite{TO}. We also remark that, similar to \cite{Pollicott} in which Pollicott commented that the condition of matrices being positive can be weakened to matrices preserving a positive cone, the results in this paper can also be easily adapted to this weaker assumption.

\section{Projective Actions}
\label{sec:proj.act}
In this section, we study the projective actions of positive matrices. The main result is \autoref{cor:choice.nbhd.complex}, which is essential in defining the Banach space on which the transfer operators are acting.

Given an invertible matrix $A\in GL(d,\mathbb{R})$, we write $\widetilde{A}:\proj{d-1}\to\proj{d-1}$
for the projectivized map satisfying $\widetilde{A}\circ \pi = \pi \circ A$,
where $\pi$ is the natural quotient map from $\mathbb{R}^d$ to the real projective space $\proj{d-1}$.  In this section, we always assume the given matrix $A$ is positive, and we investigate the analytic properties of this induced map. 
Denote $\posq = \{(x_1, \dots, x_d): x_i > 0\}$. 
Notice $ A(\posq) \subset \posq$ 
for any positive matrix $A$, so $\widetilde{A}:\Delta \to \Delta$ when restricted to  $\Delta=\pi(\mathbb{R}^d_+)$. 

As the representation of $\proj{d-1}$ as a quotient manifold of $\mathbb{R}^d$ 
is awkward for computation,
it is conventional to choose representatives of $\Delta$ 
in $\mathbb{R}^d_+$ in one of the two following ways:
\begin{enumerate}[label=(\arabic*)]
\item  Identify a point $\hat{x}\in\Delta$ with its unique representative in
$S_+^{d-1}=S^{d-1} \cap \posq$, where $S^{d-1}$ is the unit sphere in $\mathbb{R}^d$.
Given a positive matrix $A$, the induced map $\bar{A}$
\begin{equation}\label{eq:bar.defn}
	\bar{A}:S^{d-1}\to S^{d-1},\qquad \bar{x}\mapsto A\bar{x}/\|A\bar{x}\|_2
\end{equation}
may be identified with $\widetilde{A}|_{\Delta}$. 

\item Given $x=(x_1,\dots, x_d)\in\posq$, since $x_1\not=0$ we may take this as the 
normalisation constant, thus identifying $\proj{d-1}$ with
\begin{equation}\label{eq:defn.R}
R_+^{d-1}:=\{x=(x_1,\dots, x_d)\in\posq:x_1=1 \},
\end{equation}
The map $\widetilde{A}$ on $\Delta$ may be identified with
\begin{equation}\label{eq:defn.hat}
\dhat{A}:R_+^{d-1}\to R_+^{d-1},\qquad x=(1,x_2\dots,x_d)\mapsto Ax/(Ax)_1, 
\end{equation}
where we denote by $(Ax)_i \ (1\le i\le d)$ the $i$-th entry of the vector $(Ax)$.
We can naturally identify $R^{d-1}_+$ with
\[
	\mathbb{R}^{d-1}_+:=\{(x_2, \dots, x_d)\in\mathbb{R}^{d-1}: x_i>0, 2\le i\le d \}. 
\]
\end{enumerate}

We will say a complex analytic function $\psi:\mathbb{C}^d \to \mathbb{C}^d$ maps an open set $U_1\subset \mathbb{C}^d$ \emph{strictly inside} $U_2\subset \mathbb{C}^d$ if $\overline{\psi(U_1)}\subseteq U_2$, where $\overline{\psi(U_1)}$ is the closure of the image of $U_1$. Similarly, a real analytic function $\varphi:\mathbb{R}^d\to\mathbb{R}^d$ maps an open set $V_1\subset \mathbb{R}^d$ strictly inside $V_2\subset \mathbb{R}^d$ if $\overline{\varphi(V_1)}\subseteq V_2$.
 
The real map $\dhat{M}$ induced by the positive matrix $M$ can be extended naturally to a complex map. Let 
 \begin{align*}
 	C^{d-1}&:=\{z=(1, z_2,\dots, z_d):  z_i\in\mathbb{C}, \ 2\le i\le d \},\\
 	C^{d-1}_+&:=\{z=(1, z_2,\dots, z_d): \real z\in\posq \}.
 \end{align*}
 Then we can define $ \dhat{M}: C^{d-1}\to C^{d-1}$ by 
$$
 z=(1, z_2,\dots, z_d)\mapsto \dfrac{Mz}{(Mz)_1}.
$$
Here $(Mz)_1\not=0$ since $\real (Mz)_1=(M\real z)_1\not =0$.
 If $M= \left(a_{ij}\right)_{1\le i,j\le d}$, we may write $\widehat{M}$ explicitly as a projective transformation
 \[
 	(z_2,\dots, z_d)\mapsto \left( \dfrac{a_{21}+\sum_{j=2}^d a_{2j}z_j}{a_{11}+\sum_{j=2}^d a_{1j}z_j},\dots,\dfrac{a_{d1}+\sum_{j=2}^d a_{dj}z_j}{a_{11}+\sum_{j=2}^d a_{1j}z_j} \right).
 \]

\begin{lemma}\label{lm:choice.nbhd.complex}
Let $M$ be a positive matrix, and $\widehat{M}$ be the induced map of $M$ acting on $C^{d-1}$. Then there exists a bounded, convex, and connected open set $U$ in $C_+^{d-1}$ such that $\widehat{M}|_U$ maps $U$ strictly inside $U$, and $\widehat{M}|_U$ is a complex analytic function. Moreover, $U$ can be chosen such that $\widehat{M}|_V$ maps $V\subset R_+^{d-1}$ strictly inside $V$ as a real analytic map, where $V=\real U$ is a connected open set.
\end{lemma}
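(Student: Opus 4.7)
My plan is to thicken the real domain from \autoref{lm:choice.nbhd} by a small complex tube in the imaginary directions and to arrange for $\widehat{M}$ to send it strictly inside itself. As a preliminary, $\widehat{M}$ is complex analytic on all of $C_+^{d-1}$: for $z \in C_+^{d-1}$ the denominator $(Mz)_1 = \sum_j M_{1j} z_j$ has real part $(M\real z)_1 > 0$, hence never vanishes, so $\widehat{M}(z) = Mz/(Mz)_1$ is a well-defined holomorphic rational map there.

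The natural first attempt is to take $V_0$ from \autoref{lm:choice.nbhd} (shrinking if necessary so that $\overline{V_0} \subset R_+^{d-1}$) and try $U_\epsilon = V_0 + iB(0,\epsilon)$ for small $\epsilon > 0$. For $z = x + iy \in U_\epsilon$, a Taylor expansion in $y$ gives
\[
\widehat{M}(x+iy) = \widehat{M}(x) + i\,D\widehat{M}(x)\,y + O(\epsilon^2),
\]
so the real part stays inside $V_0$ for $\epsilon$ small, using $\overline{\widehat{M}(V_0)} \subset V_0$. The imaginary part, however, has Euclidean norm bounded only by $\|D\widehat{M}\|_\infty\,\epsilon + O(\epsilon^2)$, and $\|D\widehat{M}\|_\infty$ on $V_0$ need not be strictly less than $1$. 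This is the main obstacle: a naive tube of fixed height need not be strictly self-mapping in the imaginary directions.

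To overcome it, I would shrink $V_0$ to a small neighbourhood of the Perron fixed point of $\widehat{M}$. By the Perron--Frobenius theorem, the positive matrix $M$ has a simple dominant eigenvalue $\lambda_1 > 0$ with positive eigenvector $v_1$ and $|\lambda_j| < \lambda_1$ for $j \ge 2$; the normalisation $\hat v_1$ is a fixed point of $\widehat{M}$ in $V_0$, and $D\widehat{M}(\hat v_1)$ has spectrum $\{\lambda_j/\lambda_1 : j \ge 2\}$, all of modulus $< 1$. Standard linear algebra therefore supplies a norm $\|\cdot\|_*$ on $\mathbb{R}^{d-1}$ and a constant $\rho' < 1$ such that $\|D\widehat{M}(x)\|_* \le \rho'$ throughout an open neighbourhood $W$ of $\hat v_1$. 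The iterates $V_n := \widehat{M}^n(V_0)$ satisfy $\overline{V_{n+1}} \subset V_n$ by induction on $n$, and Birkhoff--Hopf contraction in the Hilbert metric on $R_+^{d-1}$ forces their Hilbert diameters to vanish; hence the $V_n$ converge to $\{\hat v_1\}$ and lie inside $W$ for some $n$. Taking $V := V_n$ for such an $n$ and $U := \{x + iy : x \in V,\ \|y\|_* < \epsilon\}$ for $\epsilon$ sufficiently small, the same Taylor argument now gives $\|\img \widehat{M}(x+iy)\|_* \le \rho'\epsilon + O(\epsilon^2) < \epsilon$ while the real part remains in $V$, so $\overline{\widehat{M}(U)} \subset U$. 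The set $U$ is open, bounded and connected, $\widehat{M}|_U$ is complex analytic, and $\real U = V$ satisfies $\overline{\widehat{M}(V)} \subset V$, giving the ``moreover'' clause.
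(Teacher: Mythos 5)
Your argument is correct, but it takes a genuinely different route from the paper's. The paper works globally: it introduces the explicit domain $U_0=\{z\in C_+^{d-1}:\real(z_j/z_i)>0,\ 2\le i,j\le d\}$ of \eqref{eq:U0}, verifies by a direct entrywise computation with the positive entries $a_{ij}$ that $\widehat{M}$ maps $U_0$ into $U_0$ with bounded image, and then takes $U$ to be $U_0$ intersected with a large bounded set; the real statement follows since $\real U_0=R_+^{d-1}$. You instead localise at the Perron fixed point: the spectral gap of $D\widehat{M}(\hat v_1)$ (eigenvalues $\lambda_j/\lambda_1$, consistent with the computation in \autoref{lemma:modified}), an adapted norm making $D\widehat{M}$ a uniform contraction near $\hat v_1$, Birkhoff--Hopf contraction to drive the iterates $V_n=\widehat{M}^n(V_0)$ of the set from \autoref{lm:choice.nbhd} into that neighbourhood, and finally a thin complex tube over $V_n$ which the Taylor estimate shows is mapped strictly inside itself. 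The details you leave implicit (openness, connectedness and nesting $\overline{V_{n+1}}\subset V_n$ of the iterates, uniformity of the $O(\epsilon^2)$ remainder over $\overline{V}$, comparability of the Hilbert and Euclidean metrics on compact subsets of $R_+^{d-1}$, cf.\ \autoref{prop:dH.contra}) are all easily supplied, so this is a sound proof of the lemma as stated; your observation that a tube of uniform height over the original $V_0$ need not be self-mapping is also a genuine point that the paper's construction simply never has to confront. What the paper's approach buys, and yours does not, is uniformity in the matrix: the same $U_0$ is mapped into itself by every positive matrix, which is exactly why \autoref{cor:choice.nbhd.complex} (the finite-family version actually used to build the function space) follows immediately ``by the proof of'' the lemma, whereas your $U$ is a small tube around the Perron eigenvector of the particular $M$, and distinct matrices have distinct fixed points, so extending your construction to a common domain for $\{M_i\}_{i\in I}$ would require a further argument. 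What your approach buys in exchange is a conceptually transparent mechanism (the spectral gap of the projectivised derivative) and a quantitatively contracting complex domain, without any entrywise computation.
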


\begin{proof}
Denote
\begin{equation}\label{eq:U0}
U_0:=\{(z_2,\dots, z_d)\in C_+^{d-1}: \real(z_j/z_i)>0, \ 2\le i, j\le d\}.
\end{equation}
If we write $z_j=x_j+\imag y_j$, where $x_j\in\mathbb{R}_+, y_j\in\mathbb{R}$ for each $2\le j\le d$, then $\real(z_j/z_i)>0$ implies that
\begin{equation}\label{eq:cond}
x_ix_j+y_iy_j> 0, \quad  2\le i, j\le d. 
\end{equation}

We first remark that $\widehat{M}|_{U_0}:U_0\to U_0$ is well-defined as a complex map. In fact, if we write $\widehat{M}$ in the form 
 \[(z_2,\dots, z_d)\mapsto \left( \dfrac{a_{21}+\sum_{j=2}^d a_{2j}z_j}{a_{11}+\sum_{j=2}^d a_{1j}z_j},\dots,\dfrac{a_{d1}+\sum_{j=2}^d a_{dj}z_j}{a_{11}+\sum_{j=2}^d a_{1j}z_j} \right)=:(w_2,\cdots, w_d),\]
 where $M=(a_{ij})_{1\le i, j\le d}$. Note that $w_2,\dots, w_d$, and $w_j/w_i \ (2\le i, j\le d)$ are all of the form
 \[
\dfrac{(a_{\gamma1}+\sum_{j\not=1} a_{\gamma j}x_j)+\imag\sum_{j\not=1} a_{\gamma j}y_j}{(a_{\delta 1}+\sum_{j\not=1} a_{\delta j}x_j)+\imag\sum_{j\not=1} a_{\delta j}y_j}, \quad 1\le \gamma, \delta\le d.
\]
Therefore, checking $\widehat{M}(U_0)\subseteq U_0$ reduces to checking 
\begin{align*}
\real &\left(\dfrac{(a_{\gamma1}+\sum_{j\not=1} a_{\gamma j}x_j)+\imag\sum_{j\not=1} a_{\gamma j}y_j}{(a_{\delta 1}+\sum_{j\not=1} a_{\delta j}x_j)+\imag\sum_{j\not=1} a_{\delta j}y_j}\right) \\
&=\dfrac{(a_{\gamma1}+\sum_{j\not =1} a_{\gamma j}x_j)(a_{\delta 1}+\sum_{j\not=1} a_{\delta j}x_j)+\sum_{j\not=1} a_{\gamma j}y_j\sum_{j\not=1} a_{\delta j}y_j}{(a_{\delta 1}+\sum_{j\not=1} a_{\delta j}x_j)^2+(\sum_{j\not=1} a_{\delta j}y_j)^2}\\
&=\dfrac{a_{\gamma 1}a_{\delta 1}+\sum_{i,j\not=1}a_{\gamma i}a_{\delta j}(x_ix_j+y_iy_j)+\sum_{j\not=1} a_{\delta 1}a_{\gamma j}x_j+\sum_{j\not=1} a_{\gamma 1}a_{\delta j}x_j}{(a_{\delta 1}+\sum_{j\not=1} a_{\delta j}x_j)^2+(\sum_{j\not=1} a_{\delta j}y_j)^2}> 0,
\end{align*}
by  $a_{ij}>0, x_j>0 \ (1\le i,j\le d)$, and \eqref{eq:cond}. 

For each $2\le \gamma\le d$, by \eqref{eq:cond}
\begin{align*}
\left|\dfrac{a_{\gamma1}+\sum_{j=2}^d a_{\gamma j}z_j}{a_{11}+\sum_{j=2}^d a_{1j}z_j}\right|^2
=&\dfrac{(a_{\gamma 1}+\sum_{j\not =1}a_{\gamma j}x_j)^2+(\sum_{j\not=1} a_{\gamma j}y_j)^2}{(a_{11}+\sum_{j\not=1} a_{1j}x_j)^2+(\sum_{j\not=1} a_{1j}y_j)^2}\\
=&\dfrac{a_{\gamma 1}^2+\sum_{j,l\not = 1}a_{\gamma j}a_{\gamma l}(x_jx_l+y_j y_l)+2\sum_{j\not =1} a_{\gamma 1}a_{\gamma j}x_j}{a_{11}^2+\sum_{j,l\not=1}a_{1j}a_{1l}(x_jx_l+y_j y_l)+2\sum_{j\not=1} a_{11}a_{1j}x_j}\\
\le &  \max_{1\le j,l\le d}\left\{\dfrac{a_{\gamma j}a_{\gamma l}}{a_{1j}a_{1l}} \right\}=:C_\gamma.
\end{align*}
Hence $\widehat{M}$ maps $U_0$ inside a bounded open set
\[U=\left\{(w_2,\dots, w_d)\in C_+^{d-1}: |w_\gamma|< C_\gamma +1, \ 2\le \gamma\le d\right\}\cap U_0.\]

Now if we restrict the domain from $U_0$ to $U$, we also have that $\widehat{M}$ maps $U$ inside $U$, and it is strictly inside if $\widehat{M}(U)$ is bounded away from $0$. From the formula for $\real(w_j)$ above we can see in order for $\widehat{M}(U)$ to be close to $0$, we would need for $x_i, y_i$ to become arbitrarily large. 
This is excluded since the modulus is bounded in $U$. Thus $\widehat{M}$ maps $U$ strictly inside $U$. Also note $\real U_0=R_+^{d-1}$ and $\im\widehat{M}|_{R_+^{d-1}}\subset U$. Therefore $\widehat{M}|_{\real U}$ maps $\real U$ strictly inside $\real U$. 

Finally, we prove $U$ and $\real U$ are connected. Define
\[
W = \{(x_2, \cdots, x_d)\in R_+^{d-1}: |x_\gamma| < C_\gamma + 1, \  2 \le \gamma\le d \}.
\]
$W$ is obviously convex and thus path-connected. Now for each point $z = (z_2, \dots, z_d) \in U$, write $z_j = x_j + \imag y_j$ for each $j$, and take $x = (x_2, \dots, x_d)$. It easy to see $x \in W$. Construct $\eta : [0, 1] \to C_+^{d-1}$ with $\eta(t) = (1-t)z + tx$ as a path from $z$ to $x$. Note
\[
\eta(t) = (x_2 + \imag t y_2, \dots, x_d  + \imag ty_d).
\]
Since $x_ix_j + t^2 y_iy_j = (1-t^2) x_ix_j + t^2(x_ix_j+y_iy_j) > 0$, the path $\eta$ completely lies in $U$. Therefore each point $z\in U$ is path-connected (in $U$) to a point in the path-connected set $W$. Thus $U$ is also path-connected, and consequently connected. Connectedness of $\real U$ follows by noticing $\real U = W$.
\end{proof}

\begin{cor}\label{cor:choice.nbhd.complex}
Let $\{M_i\}_{i\in I}$ be a finite set of positive matrices, and $\widehat{M}_i$ be the induced map of $M_i$ on $C^{d-1}$. Then there exists a bounded and connected open set $U\subset C_+^{d-1}$ such that for each $i\in I$, $\widehat{M}_i|_U$ maps $U$ strictly inside $U$ and each $\widehat{M}_{i}|_U$ is a complex analytic function. Moreover, $U$ can be chosen such that $\widehat{M}_{i}|_V$ maps $V\subset R_+^{d-1}$ strictly inside $V$ as a real analytic map, where $V=\real U$ is a connected open set.
\end{cor}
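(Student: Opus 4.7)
The plan is to lift the single-matrix construction from the proof of \autoref{lm:choice.nbhd.complex} to the finite family $\{M_i\}_{i\in I}$ by noting that the cone $U_0$ defined in \eqref{eq:U0} depends only on the ambient dimension $d$, not on the underlying matrix. Consequently the same $U_0$ can serve as a common source domain for every $\widehat{M}_i$ simultaneously, and only the ball-type constraint bounding $|w_\gamma|$ has to be enlarged to accommodate the full family.

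Concretely, I would first rerun the computation in the proof of \autoref{lm:choice.nbhd.complex} for each matrix $M_i$ separately, obtaining constants $C_{i,\gamma}$ ($2\le\gamma\le d$) such that $\widehat{M}_i(U_0) \subset \{|w_\gamma| \le \sqrt{C_{i,\gamma}}\} \cap U_0$, with strict inclusion into $U_0$. (Strictness in the cone direction comes from the positive floor contributed by the product of the $(\gamma,1)$ and $(\delta,1)$ entries of $M_i$ to the numerator in the formula for $\real(w_\gamma/w_\delta)$.) Then, with $C_\gamma := \max_{i\in I} C_{i,\gamma} + 1$, I would define
\[
U := \bigl\{(w_2,\ldots,w_d) \in C^{d-1}_+ : |w_\gamma| < C_\gamma,\ 2\le \gamma\le d\bigr\} \cap U_0.
\]
Since each $\widehat{M}_i$ maps $U_0$ strictly inside the smaller set $\{|w_\gamma|<C_{i,\gamma}+1\} \cap U_0 \subseteq U$, and since $U \subset U_0$, monotonicity of images yields $\overline{\widehat{M}_i(U)} \subset \overline{\widehat{M}_i(U_0)} \subset U$ for every $i\in I$. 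Complex analyticity of each $\widehat{M}_i|_U$ is inherited directly from the lemma, and the real-part statement follows by restricting to $V := \real U = R^{d-1}_+ \cap \{|w_\gamma| < C_\gamma\}$, which is open, bounded, and convex.

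The step I expect to require the most care is verifying that $U$ as just defined is actually \emph{connected}: the cone $U_0$ is not convex in $\mathbb{C}^{d-1}$, so its intersection with an open polydisc is not obviously so. I would handle this by showing that every $z \in U$ can be joined to its real part $\real z \in V$ along the straight-line segment $z_t := (1-t)z + t\real z$, $t\in[0,1]$. The ball constraint $|(z_t)_\gamma|<C_\gamma$ is preserved by convexity of $|\cdot|$, and the defining quantities $x_ix_j + y_iy_j$ of $U_0$ can only improve along the segment, since the imaginary parts get scaled by $(1-t)$ while the real parts stay fixed. Because $V$ is convex and hence connected, and every point of $U$ connects to $V$ inside $U$, the whole of $U$ is path-connected. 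The remainder of the statement then follows by direct transcription from \autoref{lm:choice.nbhd.complex}.
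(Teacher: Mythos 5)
Your proposal is correct and follows essentially the same route as the paper: it reuses the matrix-independent cone $U_0$ from \autoref{lm:choice.nbhd.complex} and enlarges the bounded constraint to absorb all images $\widehat{M}_i(U_0)$ (the paper takes the union of the individual sets $U_i$ intersected with $U_0$, you take the maximum of the constants $C_{i,\gamma}$ --- effectively the same construction), so that each $\widehat{M}_i$ maps $U_0$, hence $U$, strictly inside $U$, with the real statement obtained by taking $V=\real U$. Your explicit path-connectedness check via the segment from $z$ to $\real z$ is a correct additional detail that the paper leaves implicit.
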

\begin{proof}
By the proof of \autoref{lm:choice.nbhd.complex}, for each $i\in I$, we can find $U_i\subset C_+^{d-1}$ such that $\widehat{M}_i$ maps $U_0$ to $U_0\cap U_i$, where $U_0$ is given by \eqref{eq:U0}. Now take $U=\left(\bigcup_{i\in I}U_i\right)\cap U_0$, then $\widehat{M}_i$ maps $U_0$ strictly inside $U$, and thus maps $U$ strictly inside $U$. Finally, since 
\[
\bigcup_{i\in I}U_i= \{(w_2,\dots, w_d)\in C_+^{d-1} : |w_\gamma| < \max_\delta C_\delta + 1, \ 2\le \gamma \le d\}.
\]
we can show that each point $z \in U$ is path-connected (in $U$) to a point in a path-connected set $W$ as in the proof of  \autoref{lm:choice.nbhd.complex}.
\end{proof}

For the rest of the article, we will call an open connected set in $\mathbb{C}^d$ (or in $\mathbb{R}^d$) a \emph{domain}, and $U$ will be assumed throughout to denote a domain that has the properties proved to be possible in Corollary \ref{cor:choice.nbhd.complex}.

\section{Markovian Transfer Operators}
\label{sec:mkv.to}

By \autoref{cor:choice.nbhd.complex} we can find proper domains $U\subset C_+^{d-1}$ and $V=\real U\subset R_+^{d-1}$ that are mapped strictly inside themselves by each $\widehat{M}_i$, where $\{M_i\}_{1\le i\le k}$ is a finite set of positive matrices. Denote by $\ana(V)$ the set of real analytic functions on $V$, and by $\A_\infty(U)$ the set of complex analytic functions on $U$ with continuous extensions to $\partial U$. $\A_\infty(U)$ equipped with the supremum norm,
$\|f\|:=\sup_{z\in U}|f(z)|=\sup_{z\in \partial U}|f(z)|$ is a Banach space. 
Throughout this section $\|\cdot\|$ will refer to this as the supremum norm.

For each pair $(i,j)$ such that $1\le i,j\le k$, we can define an operator
${\M_{ij}:\A_\infty(U)\to\A_\infty(U)}$ by
\begin{equation}\label{eq:M.ij}
(\M_{ij}w)(z)=p_{ij}w(\widehat{M}_jz).
\end{equation}
We note here that the same formula defines an operator on $\ana(V)$, which we also denote by  $\M_{ij}$. For the rest of this chapter, unless otherwise indicated, whenever we define an operator on $\A_\infty(U)$, we will also be defining an operator on $\ana(V)$ as well as on $\cont{V}$, for which the same notation will be used. For convenience, we denote by $\ana(V) \cap \A_\infty(U)$ the set of complex analytic functions on $U$ which restrict to real analytic functions on $V = \real U$. 

Now denote by $\Ak(U)$ the direct sum of $k$ copies of $\A_\infty(U)$. Let $\mathbf{q}=(q_1,\dots,q_k)$ be the stationary distribution for $P$. For each $\mathbf{w}=(w_1,\dots, w_k)\in \Ak(U)$, define ${\|\mathbf{w}\|:=\sum_{i=1}^k q_i\|w_i\|}$. 

For each $1\le i\le k$, define the operator $\mathcal{E}_i:\A_\infty(U)\to\Ak(U)$ by lifting $w\in\A_\infty(U)$ to an element in $\Ak(U)$ with $i$-th entry $w$ and all other entries $0$; define $\mathcal{P}_i:\Ak(U)\to\A_\infty(U)$ to be the natural projection onto the $i$-th component. 

Define the operator $\M:\Ak(U)\to\Ak(U)$ by 
\begin{equation}\label{eq:to.M.Ak}
\M=\sum_{1\le i, j\le k}\mathcal{E}_i\M_{ij}\mathcal{P}_j,
\end{equation}
where $\M_{ij}$ is given by \eqref{eq:M.ij}. Similarly $\M$ acts on $\anak{V}$ as well as on $\contk{V}$. And we also use the non-standard notation $\anak{V}\cap\Ak(U)$ to denote the set of vectors of complex analytic functions which restrict to real analytic functions.

The main goal of this section is to prove \autoref{thm:spec.M}, which states some key spectral properties of these transfer operators. The following lemma from complex analysis, sometimes referred to as Montel's Theorem \cite[Theorem 14.6]{Rudin.cplx}, will be used repeatedly in the proof of \autoref{thm:spec.M} and in later sections.

\begin{lemma}\label{lm:equicont.cpct}
Let $W$ be a domain of $\mathbb{C}^d$ and $\Lambda$ a 
bounded subset of $\A_\infty(W)$. 
Then for any compact subset $K\subset W$, $\Lambda$ is equicontinuous on $K$. \\
In other words, for any $\epsilon>0$, there exists $\delta>0$ (not depending on $w$) such that  whenever $z, z'\in K$ satisfies $\|z-z'\|<\delta$, we have $|w(z)-w(z')|<\epsilon$, for any $w\in\Lambda$.  In fact, there exist $\delta'>0$ and $C_K>0$  (both not depending on $w$) such that whenever $\|z-z'\|<\delta'$, we have 
\[|w(z)-w(z')|\le C_K\|z-z'\|.\]
Consequently, there exists some $C_K'>0$ (not depending on $w$) such that
\[\left| \dfrac{\partial w}{\partial z_i}(z)\right|\le C_K',\quad 1\le i\le d, \ z\in K.\]
\end{lemma}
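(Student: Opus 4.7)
The plan is to prove the lemma by a direct application of Cauchy's integral formula for several complex variables. Let $M>0$ denote a uniform bound on $\|w\|$ for $w\in\Lambda$.

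First I would fix a uniform ``tube'' around $K$. Since $K$ is a compact subset of the open set $W\subset\mathbb{C}^d$, there exists $\rho>0$ such that for every $z\in K$ the closed polydisc $\overline{P(z,\rho)}:=\{\zeta\in\mathbb{C}^d : |\zeta_i-z_i|\le \rho,\ 1\le i\le d\}$ is contained in $W$. Consequently the thickening $K':=\{\zeta\in\mathbb{C}^d : \operatorname{dist}(\zeta,K)\le \rho/2\}$ is a compact subset of $W$, and for every $z\in K'$ the closed polydisc $\overline{P(z,\rho/2)}$ still lies inside $W$.

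Second, I would obtain the derivative bound by Cauchy's integral formula. For $z\in K'$, $w\in\Lambda$, and any coordinate $i$, differentiating the Cauchy integral representation on $\overline{P(z,\rho/2)}$ gives
\[
\frac{\partial w}{\partial z_i}(z)=\frac{1}{(2\pi\imag)^d}\oint\frac{w(\zeta)}{(\zeta_i-z_i)^2\prod_{j\ne i}(\zeta_j-z_j)}\diff\zeta_1\cdots\diff\zeta_d,
\]
where the integration runs over the distinguished boundary $\{|\zeta_j-z_j|=\rho/2 : 1\le j\le d\}$. The standard modulus estimate yields $\bigl|\partial w/\partial z_i(z)\bigr|\le 2M/\rho$, uniformly in $w$, $z$, and $i$. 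Setting $C_K':=2M/\rho$ gives the pointwise derivative bound claimed in the final assertion of the lemma (which in particular holds on $K\subset K'$).

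Third, I would deduce the Lipschitz estimate and hence equicontinuity. Take $\delta':=\rho/2$, so that whenever $z,z'\in K$ satisfy $\|z-z'\|<\delta'$ the straight line segment from $z$ to $z'$ lies inside $K'$. Viewing $w$ as a real-differentiable function of the underlying $\mathbb{R}^{2d}$ coordinates and integrating the directional derivative along this segment, the uniform bound on the partial derivatives gives $|w(z)-w(z')|\le C_K\,\|z-z'\|$ for some constant $C_K$ depending only on $C_K'$ and the dimension $d$. The first equicontinuity claim then follows by taking $\delta:=\min\bigl(\delta',\,\varepsilon/C_K\bigr)$.

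I expect no substantive obstacle: the statement is essentially Cauchy's estimate in several complex variables, and the boundedness of $\Lambda$ in the sup norm is precisely what feeds the modulus estimate inside the Cauchy kernel. The only care needed is the bookkeeping of radii, so that the derivative bound is valid on a thickening of $K$ through which line segments between nearby points of $K$ are guaranteed to pass.
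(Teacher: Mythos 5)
Your proof is correct. The paper does not actually prove this lemma: it simply invokes it as (a several-variables version of) Montel's theorem with a citation to Rudin, so your polydisc Cauchy-estimate argument is exactly the standard proof underlying that citation --- uniform sup-norm bound on $\Lambda$ plus Cauchy's formula on a $\rho/2$-thickening $K'$ of $K$ gives the uniform derivative bound $2M/\rho$, and integrating $\partial w/\partial z_i$ along segments (which stay in $K'$ when $\|z-z'\|<\rho/2$, and pick up no $\bar z$ terms by holomorphy) gives the Lipschitz estimate and hence equicontinuity. The only bookkeeping left implicit is the harmless dependence of the constants on the choice of norm and dimension (e.g.\ $C_K = d\,C_K'$ or $\sqrt{d}\,C_K'$), and the compactness argument producing the uniform polydisc radius $\rho$; both are routine.
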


Here follows the first main result. We will see that this theorem deals with the case $t=0$ of the more general parametrised transfer operators. We begin with the special case because the proof of the more general \autoref{thm:ana.perturb.Lt} will depend upon being reduced to this case. 

We remark here the conclusions \ref{thm.M.item:eigen} and \ref{thm.M.item:spec} are sufficient for the deduction of the algorithm. The rest of the conclusions are generalisations of Lemma 3.1 (c)(d) in \cite{Pollicott} as byproducts. 

\begin{thm}\label{thm:spec.M}
Let $\M$ be the operator on $\Ak(U)$ defined by \eqref{eq:to.M.Ak}. Then
\begin{enumerate}[label=(\alph*)]
\item\label{thm.M.item:bnd} $\M$ is a bounded operator with $\|\M\|=1$;
\item\label{thm.M.item:eigen}The function $\ok:=\mathbf{1}\oplus\cdots\oplus\mathbf{1}\in\Ak(U)$ is the unique eigenfunction of $\M$ corresponding to the eigenvalue $1$;
\item\label{thm.M.item:spec} The spectrum of $\M$ contains the single isolated eigenvalue $1$ (of algebraic multiplicity $1$),
with the rest of the spectrum contained in a closed subset of $\{z\in\mathbb{C}:|z|<1\}$;
\item\label{thm.M.item:lin.fnl} There exist unique bounded linear functionals $\nu_1,\dots, \nu_k$ on $\A_\infty(U)$, such that $\|\nu_i\|=1$ for each $1\le i\le k$, and such that $\M'\nu=\nu$, where $\M'$ denotes the dual operator of $\M$, $\nu=\oplus_{i=1}^k q_i\nu_i$ and ${\mathbf{q}=(q_1,\dots,q_k)}$ is the stationary distribution for $P$;
\item\label{thm.M.item:convg.cplx}For functions $w_1,\dots, w_k\in\A_\infty(U)$, denoting $\bw=(w_1,\dots,w_k)$, we have
\[
	\M^n\mathbf{w}\to (\nu_1(w_1),\dots, \nu_k(w_k)) \ \text{, \quad as } n\to\infty,
\]
in the topology induced by the supremum norm, where $\nu_1,\dots, \nu_k$ are the linear functionals given in \ref{thm.M.item:lin.fnl}.
\end{enumerate}

Moreover, when $\M$ is regarded as acting on $\contk{V}$, then

\begin{enumerate}[resume, label=(\alph*)]
\item\label{thm.M.item:prob.meas} There exist $k$ unique probability measures $\nu_1^*,\dots, \nu_k^*$ on $\overline{V}$, such that 
\begin{equation}\label{eq:M.inv.real}
\int\M\bbf\diff\nu^*=\int\bbf\diff\nu^*,\quad \bbf=(f_1,\dots, f_k)\in\contk{V},
\end{equation}
where $\nu^*=\oplus_{i=1}^k q_i\nu_i^*$ and $\mathbf{q}=(q_1,\dots,q_k)$ is the stationary distribution for $P$;
\item\label{thm.M.item:convg.real}For any $\mathbf{f}=(f_1,\dots,f_k)\in\contk{V}$,
\[
	\M^n\mathbf{f}\to\left(\int f_1\diff\nu_1^*,\dots, \int f_k\diff\nu_k^*\right)
	\ \text{,\quad as } n\to\infty,
\]
in the topology induced by the supremum norm, where $\nu_1^*,\dots, \nu_k^*$ are the probability measures given in \ref{thm.M.item:prob.meas}.
\end{enumerate}
\end{thm}

\begin{proof}
Because we do not plan to prove the results strictly in order, in the following we separate the proof into parts, for ease of reference.

\begin{enumerate}[wide, label= Part {\arabic*}., ref=part \arabic*]
\item \label{M.pf:bnd}
We start with proving \ref{thm.M.item:bnd}. For any $\bw\in\Ak(U)$, we have
\begin{align*} 
\|\M\bw\|&=\sum_{i=1}^k q_i\Bigl\|\sum_{j=1}^k\M_{ij}w_j\Bigr\|\\
&\le\sum_{i=1}^k q_i \sup_{z\in U}
\Bigl|\sum_{j=1}^k p_{ij}w_j(\widehat{M}_jz)\Bigr|\\
&\le\sum_{i=1}^k \sum_{j=1}^k q_ip_{ij}\|w_j\|\\
&=\|\bw\|.
\end{align*}
Since $\|\M\ok\|=\|\ok\|=1$ this proves that $\|\M\|=1$.

\item \label{M.pf:eigenfn}
In this part, we prove \ref{thm.M.item:eigen}. Note here by proving the uniqueness of eigenfunction, we only prove the geometric simplicity of eigenvalue 1. The proof of algebraic simplicity is left to the proof of \ref{thm.M.item:spec}. 

It is obvious that $\ok$ is an eigenfunction corresponding to the eigenvalue $1$. 
Suppose $\bw$ is another eigenfunction corresponding to the eigenvalue $1$. Then since $\M\bw=\bw$, we have, for each $i\in \{1,\dots,k \}$ and $z\in U$, 
\[
	w_i(z)=\sum_{j=1}^k (\M_{ij}w_j)(z)=\sum_{j=1}^k 	p_{ij}w_j(\widehat{M}_jz). 
\]
Choose a compact set $K$ such that $\bigcup_{i=1}^k\widehat{M}_iU\subset K\subset U$.  Assume $|w_i(z)|$ attains its supremum at $\zeta_i\in\partial U$, note $\widehat{M}_j\zeta_i\in K$, we have
\begin{equation}\label{eq:wi.const}
\|w_i\|=|w_i(\zeta_i)|=\left|\sum_{j=1}^k p_{ij}w_j(\widehat{M}_j \zeta_i)\right|\le\sum_{j\not= i}p_{ij}\|w_j\|+p_{ii}\sup_{z\in K}|w_i(z)|\le \sum_{j=1}^k p_{ij}\|w_j\|.
\end{equation}
Then 
\[\|\bw\|=\sum_{i=1}^k q_i\|w_i\|\le \sum_{i,j=1}^k q_i p_{ij}\|w_j\|=\|\bw\|.\]
Therefore the inequalities in \eqref{eq:wi.const} must be equalities, and the maximum of $w_i$ is attained on $K$. Then by the connectedness of $U$ and the maximum modulus principle, each $w_i$ has to be a constant function. Then $\bw$ is a right eigenvector of $P$ corresponding to the eigenvalue $1$, which has to be $\mathbf{1}^{\oplus k}$, completing the proof of \ref{thm.M.item:eigen}.

\item \label{M.pf:lin.fnl}

In this part, we plan to use Schauder-Tychonoff fixed-point theorem \cite[pp456]{D.Sch} to find $\nu$ introduced in \ref{thm.M.item:lin.fnl} as a fixed point of the dual operator $\M'$ of $\M$. To apply the theorem, we need to construct a convex and compact set which $\M'$ preserves. Define
\[
K = \{\nu\in [\Ak(U)]': \nu(\ok) = 1, \ \|\nu\|\le 1\}.
\]
By Banach-Alaoglu theorem \cite[pp68]{Rudin.fnl}, the unit ball $\{\|\nu\|\le 1\}$ is weak*-compact. And $K$ is weak*-closed in the unit ball thus also weak*-compact. It is not hard to see $K$ is convex, and $\mathcal{M}'$ preserves $K$ since 
\[
(\M'\nu)(\ok) = \nu(\M\ok) = \nu(\ok) = 1,
\]
and for $\bw\in\Ak(U), \ \nu\in K$,
\[
\|(\M'\nu)(\bw)\| = \|\nu(\M\bw)\|\le\|\M\bw\|\le\|\bw\|.
\]
Therefore $K$ has the fixed-point property, that is, there exists 
$\nu\in K$ such that $\M'\nu=\nu$. 

Next we prove the decomposition of $\nu$ in \ref{thm.M.item:lin.fnl}. Let $\bone_j$ be a vector with $\bone$ in the $j$-th entry and $0$ elsewhere, set $r_j = \nu(\bone_j) \in \mathbb{R}$. Obviously $\sum_{j=1}^k r_j =  \sum_{j=1}^k \nu(\bone_j) = \nu(\ok) = 1$.

For $1\le j\le k$, define $\nu_j\in (\A_\infty(U))'$ by $\nu_j(w) = \nu(\bw_j)/r_j$, where $\bw_j$ is a vector with $w$ in the $j$-th entry and $0$ elsewhere. Then it is easy to see $\nu_j(\bone) = 1$ and $\nu = \oplus_{j=1}^k r_j\nu_j$.

From $\nu(\bone_j)=\nu(\M\bone_j)$, we obtain
\[
	r_j = \nu(\bone_j)=\sum_{l=1}^k r_l\nu_l(\M_{lj}\mathbf{1})=\sum_{l=1}^k r_lp_{lj}\nu_l(\mathbf{1})=\sum_{l=1}^k r_lp_{lj}. 
\] 
This shows that $\mathbf{r}=(r_1,\dots,r_k)$ is a left eigenvector of $P$ associated with the eigenvalue $1$. Thus $\mathbf{r}=\mathbf{q}$ and
\begin{equation}\label{eq:M.nu.q}
	\nu(\bw)=\sum_{i=1}^k q_i\nu_i(w_i).
\end{equation}
Thus the existence and decomposition of $\nu$ in \ref{thm.M.item:lin.fnl} are proved. We leave rest of \ref{thm.M.item:lin.fnl} (verifications of properties of $\nu_i$) to the next part.

\item \label{M.pf:convg.cplx}
In this part, the goals are \ref{thm.M.item:convg.cplx} and the rest of \ref{thm.M.item:lin.fnl} (that is, uniqueness of $\nu_i$ and $\|\nu_i\| = 1$). We start with the proof of the convergence in \ref{thm.M.item:convg.cplx}.

Note for $1\le j\le k$ and $\bw=(w_1,\dots, w_k)\in\Ak(U)$, 
\begin{equation}\label{eq:bnd.P}
\|\mathcal{P}_j\bw\|=\|w_j\|\le q_j^{-1}\|\bw\|.
\end{equation}
By \ref{M.pf:bnd} of the proof, we have
\begin{equation} \label{E:q-1}
	\|\mathcal{P}_j\M^n\bw\|\le q_j^{-1}\|\M^n\bw\|\le q_j^{-1}\|\bw\|.
\end{equation}
That is to say, $\{\mathcal{P}_j\M^n\bw\}$ is uniformly bounded by $q_j^{-1}\|\bw\|$.
Applying \autoref{lm:equicont.cpct}, we see that $\{\mathcal{P}_j\M^n\bw\}_{n\ge 1}$ is equicontinuous.
Hence the Arzel\`{a}--Ascoli Theorem implies it is relatively
compact, so for any subsequence $(\M^{n_\alpha}\bw)_\alpha$ we may choose a further subsequence $(\M^{n_{\alpha_l}}\bw)_l$ which is convergent. We write $\bw^*=(w_1^*,\dots, w_k^*)$ as the limit point of $(\M^{n_{\alpha_l}}\bw)_l$ as $l\to\infty$.

Since $\M$ is bounded with $\|\M\|\le 1$, for any $\bw\in\Ak(U)$ we have 
$\|\M^n\bw^*\|\le\|\bw^*\|$ for each $n\ge 1$.
At the same time, for $n\ge 1, l\ge 1$, 
$\|\bw^*\|\le\|\M^{n+n_{\alpha_l}}\bw\|,$
which implies that
$\|\bw^*\|\le \|\M^n\bw^*\|.$
Hence for each $n\ge 1$,
\[
  \|\bw^*\|=\|\M^n\bw^*\|
\]
In particular, when $n=1$ assume $|w_i^*|$ attains its supremum at $\zeta_0^{(i)}\in\partial U$ and $|(\M\bw^*)_i|$ at $\zeta^{(i)}\in\partial U$. Then
\begin{align*}
\|\bw^*\| & =\sum_{i=1}^k q_i |w^*_i(\zeta_0^{(i)})|  = \sum_{i=1}^k q_i |(\M\bw^*)_i(\zeta^{(i)})| \\
& \le \sum_{1\le i,j\le k}q_i p_{ij}|w^*_{j}(\widehat{M}_{j}\zeta^{(i)})|\le \sum_{j=1}^k q_j|w_j^*(\zeta_0^{(j)})|\le\|\bw^*\|.
\end{align*}
Therefore the inequalities above must be equalities, implying that $|w_j^*|$ also attains its supremum 
at $\widehat M_j \zeta^{(i)}$ (as we have assumed that all $p_{ij}>0$). These points are all in the interior of $U$,
and consequently, by the connectedness of $U$ and the maximum modulus principle, $w_i^*(z)$ is a constant function on $U$ for each $1\le i\le k$.

By \ref{M.pf:lin.fnl} we have for any $\bw\in\Ak(U)$,
\begin{equation}\label{eq:M.nu}
\nu(\bw)=\lim_{\ell\to\infty}\nu(\M^{n_{\alpha_\ell}}\bw)=\nu(\bw^*)=\sum_{i=1}^k q_iw_i^*\nu_i(\mathbf{1})=\sum_{i=1}^k q_iw_i^*. 
\end{equation}
Comparing \eqref{eq:M.nu} with \eqref{eq:M.nu.q} gives $w_i^*=\nu_i(w_i)$. 
We have shown then that every subsequence in $\{\M^n \bw \}$ has a further subsequence that converges to the same limit $\bw^*$, hence that
\begin{equation}\label{eq:M.convg.cplx}
\lim_{n\to\infty} \M^n\bw = \bw^* = (\nu_1(w_1),\dots, \nu_k(w_k))
\end{equation}
in the topology induced by supremum norm.
Thus \ref{thm.M.item:convg.cplx} is proved. 

Now we are ready to prove rest of \ref{thm.M.item:lin.fnl}. First of all, the evaluations of each $\nu_i \ (1\le i\le k)$ on $\A_\infty(U)$ are completely determined by  \eqref{eq:M.convg.cplx}, thus each $\nu_i \ (1\le i\le k)$ is unique.

Next notice if we let $\bw_i$ be a vector with $w_i$ in the $i$-th entry and $0$ elsewhere, then
\[|\nu_i(w_i)|=\lim_{n\to\infty} |(\M^n\bw_i)_i|\le \lim_{n\to\infty}q_i^{-1}\|\M^n\bw_i\|\le q_i^{-1}\|\bw_i\|=\|w_i\|.\]
Together with $\nu_i(\mathbf{1})=1$, we have $\|\nu_i\|= 1$ for each $1\le i\le k$. This finishes the proof of \ref{thm.M.item:lin.fnl}.

\item \label{M.pf:spec}
Now we are ready to prove \ref{thm.M.item:spec}. 
Define 
\[
  \Gamma:=\{\bw\in\Ak(U):\|\bw\|\le 1, \nu(\bw)=0 \},\quad \M\Gamma:=\{\M\bw:\bw\in\Gamma \}\subset\Gamma. 
\]
For each $\bw\in\Gamma$, $\nu(\bw)=0$ implies that $\lim_{n\to\infty}\M^n\bw=0$. 
By the same reasoning as in \ref{M.pf:convg.cplx}, $\M\Gamma$ is relatively compact. Thus this convergence of $\|\M^n\bg\|$ is uniform for $\bg\in\M\Gamma\subset\Gamma$. Thus for some $0< r <1$, there exists $N>0$ such that 
$\|\M^N\bg\|\le r<1$ for any $\bg\in\M\Gamma$.
That is to say $\|\M^{N+1}\bw\|\le r <1$ for any $\bw\in\Gamma$. Thus $\|(\M|_\Gamma)^{N+1}\|^{1/(N+1)}\le r^{1/(N+1)}$, i.e., the spectral radius of $\M|_\Gamma$ is strictly smaller than 1. Since $\Gamma$ is the unit ball of a subspace of co-dimension 1, and 1 is an eigenvalue, 
it follows that $\operatorname{Spec}(\M) = \{1\} \cup \operatorname{Spec}(\M|_\Gamma)$, proving \ref{thm.M.item:spec}. The algebraic multiplicity $1$ in \ref{thm.M.item:spec} also follows.

\item \label{M.pf:prob.meas} The goal of this part is to prove \ref{thm.M.item:prob.meas}. We start with the uniqueness of a probability measure satisfying \eqref{eq:M.inv.real}. If $\nu^*$ is any probability measure satisfying \eqref{eq:M.inv.real} then by \eqref{eq:M.convg.cplx}, for any ${\bw\in\anak{V}\cap\Ak(U)}$,
\[\nu^*(\bw)=\lim_{n\to\infty}\nu^*(\M^n\bw)=\nu^*(\nu(\bw))=\nu(\bw). \]
As $\anak{V}\cap \Ak(U)$ is dense in $\contk{V}$ with respect to the topology induced by the norm $|\cdot|_\infty$ on $\contk{V}$, the evaluations of $\nu^*$ are fully determined, therefore a probability measure $\nu^*$ satisfying \eqref{eq:M.inv.real} is unique.

Now we construct $\nu^*$ defined on $\contk{V}$ from $\nu$.
By \eqref{eq:M.nu.q}, $\nu=\oplus_{i=1}^k q_i \nu_i$. 
Denote by $\tnu$ the restriction of $\nu$ to $\vuk$. Now for $\bbf\in\contk{V}$, find a sequence $\bbfa$ in $\vuk$ such that $|\bbfa-\bbf|_\infty\to 0$ as $\alpha\to\infty$ , then define
\begin{equation}\label{eq:M.nu.star.alt.defn}
	\nu^*(\bbf):=\lim_{\alpha\to\infty}\tnu(\bbfa). 
\end{equation}

For the well-definedness of $\nu^*$, if $\bbfa=(\fa_1,\dots,\fa_k)$ is a sequence in $\anak{V}\cap\Ak(U)$ such that $|\bbfa|_\infty\to 0$ as $\alpha\to\infty$, then for each $j \in \{1,\dots,k \}$, 
\[ 
	q_j|(\M^n\bbfa)_j|\le |\M^n\bbfa|_\infty\le |\bbfa|_\infty\to 0, \quad (\alpha\to\infty).
\]
Therefore, by the uniform convergence with respect to $n$ above, for each $1\le j\le k$
\[
	\lim_{\alpha\to\infty}|\nu_j^*(\fa_j)|=\lim_{\alpha\to\infty}\lim_{n\to\infty}|(\M^n\bbfa)_j|=\lim_{n\to\infty}\lim_{\alpha\to\infty}|(\M^n\bbfa)_j|=0.
\]
This shows that the limit given by \eqref{eq:M.nu.star.alt.defn} always exists and does not depend on the choice of the sequence of functions.

Now we verify that $\nu^*$ is a probability measure. If $f\in\cont{V}$ is a non-negative function, for each $i\in \{1,\dots,k \}$ 
let ${\bbfa\in\vuk}$ be a sequence such that each $\bbfa$ has $\fa$ in the $i$-th entry and $0$ elsewhere and that ${|\fa-f|_\infty\to 0}$. Then
\[
	\nu_i^*(f)=\lim_{\alpha\to\infty} (\M^n\bbfa)_i\ge 0. 
\]
Therefore $\nu_i^* \ (1\le i\le k)$ is a positive linear functional on $\cont{V}$ with $\nu_i^*(\mathbf{1})=1$. By the Riesz representation theorem, we know there exist probability measures (which we still denote by $\nu_i^*, \ 1\le i\le k$) on $\overline{V}$ such that
\[ 
  \nu_i^*(f)=\int f\diff\nu_i^*,\quad f\in\cont{V}. 
\]

Finally we verify \eqref{eq:M.inv.real} for $\nu^*$. For any $\bbf\in\contk{V}$ let $\bbfa\in\vuk$ be a sequence satisfying ${|\bbfa-\bbf|_\infty\to 0}$ as $\alpha\to\infty$. Then we have
\begin{align*}
|\nu^*(\bbf)-\nu^*(\M\bbf)|
 \le & |\nu^*(\bbf)-\nu^*(\bbfa)|+|\nu^*(\bbfa)-\nu^*(\M\bbfa)|+|\nu^*(\M\bbfa)-\nu^*(\M\bbf)| \\
\le & |\nu^*(\bbf)-\tnu(\bbfa)|+|\nu(\bbfa)-\nu(\M\bbfa)|+|\tnu(\M\bbfa)-\nu^*(\M\bbf)|\\
\to & \ 0 \quad (\text{as } \alpha\to\infty),
\end{align*}
by applying \eqref{eq:M.nu.star.alt.defn} and noticing 
\[
	|\M\bbfa-\M\bbf|_\infty\le |\bbfa-\bbf|_\infty\to 0, \quad \alpha\to\infty. 
\]
That is to say, $\nu^*(\M\bbf)=\nu^*(\bbf)$ for $\bbf\in\vuk$, proving \ref{thm.M.item:prob.meas}.

\item \label{M.pf:convg.real}
In the end, we prove \ref{thm.M.item:convg.real}. 
For $\bbf=(f_1,\dots, f_k)\in\contk{V}$ and  a sequence $\bbfa=(\fa_1,\dots, \fa_k)\in\vuk$ with $|\bbfa-\bbf|_\infty\to 0$ as $\alpha\to\infty$,  letting $n\to\infty$ in the following inequality
For any $n$ and $\alpha$ we have by \eqref{E:q-1}
\begin{align*}
\Bigl|(\M^n\bbf)_i &- \int f_i\diff\nu_i^* \Bigr|_\infty \\
\le & q_i^{-1}|\bbf-\bbfa|_\infty +\left|(\M^n\bbfa)_i-\int \fa_i\diff\nu_i^*\right|_\infty + 
   \left|\int\fa_i\diff\nu_i^*-\int f_i\diff\nu_i^*\right|_\infty .
\end{align*}
Letting $n$ go to $\infty$ and applying \eqref{eq:M.convg.cplx} we see that
\begin{align*}
\limsup_{n\to\infty}\left|(\M^n\bbf)_i-\int f_i\diff\nu_i^* \right|_\infty \le q_i^{-1}|\bbf-\bbfa|_\infty +\left|\int\fa_i\diff\nu_i^*-\int f_i\diff\nu_i^*\right|
\end{align*}
Then letting $\alpha\to\infty$ implies $|(\M^n\bbf)_i-\nu_i^*(f_i)|_\infty\to 0$ by \eqref{eq:M.nu.star.alt.defn}. 
\end{enumerate}

\end{proof}

At the end of this section, we provide the proof of nuclearity of the transfer operators.\begin{defn}\label{defn:nuclearity}
Let $T: B \to B$ be a bounded operator on Banach space $B$. Then we say $T$ is a \emph{nuclear} operator if there exists a sequence $\{u_n\}_{n\ge 0}$ in the topological dual $B'$ of $B$ and a sequence $\{v_n\}_{n\ge 0}$ in $B$ with $\|u_n\|=\|v_n\| = 1$ for all $n\ge0$, such that for $x\in B$,
\[
Tx = \sum_{n = 0}^\infty \rho_n u_n(x)v_n,
\]
where $\sum_{n} |\rho_n| < \infty$. Moreover, we say $T$ is \emph{nuclear of order zero} if for any $p>0$ we have $\sum_{n} |\rho_n|^p < \infty$.
\end{defn}

By the discussion in \cite{Ruelle}, each $\M_{ij}$ is nuclear of order zero. The idea is to use the space $\mathcal{H}(U)$ of analytic functions, which is known to be a nuclear space \cite[II. pp56]{Grothendieck1}, as a bridge, and then consider the following decomposition of $\M_{ij}$
\[
\mathcal{M}_{ij}: \mathcal{A}_\infty(U)\xrightarrow{\alpha} \mathcal{H}(U)\xrightarrow{\beta} \mathcal{A}_\infty(U),
\]
where $\alpha$ is an injection of $\AU$ into $\HU$, and $\beta$ is the transfer operator $\M_{ij}$ extended to $\HU$ with the same form. It is easy to see that for $h\in\HU$, $\beta(h)\in \AU$ since each matrix action maps $U$ strictly into $U$ according to the choice of $U$ given by \autoref{cor:choice.nbhd.complex}. Since every bounded linear map from a nuclear space into a quasi-complete space (in particular, a Banach space) is nuclear of order zero  \cite[II. pp39, pp61]{Grothendieck1}, we know that $\beta$ is nuclear of order zero, and composing with continuous linear operators preserves nuclearity of order zero \cite[II. pp9]{Grothendieck1}, hence $\M_{ij}$ is nuclear of order zero.

\begin{thm}
\label{thm:M.nuclear0} $\M$ is nuclear of order zero.
\end{thm}
\begin{proof}
By the discussion above, $\M_{ij}$ is nuclear of order zero. Since composition preserves nuclearity of order zero as mentioned above, each $\cE_i\M_{ij}\cP_j$ is nuclear of order zero. By \autoref{defn:nuclearity}, it is easy to see that the property of being nuclear of order zero is closed under linear combinations. Therefore $\M = \sum\mathcal{E}_i\mathcal{M}_{ij} \mathcal{P}_j$, is nuclear of order zero. 
\end{proof}

\section{Analytic Perturbation of Markovian Transfer Operators} \label{sec:ana.perturb.to}

We now define the parametrised transfer operators associated with Markovian products of positive matrices, and describe their spectral properties following the same perturbation method given in \cite[Section V.3, V.4]{Bougerol}.

For each $ i, j \in \{1,\dots,k \}$ and $t\in\mathbb{C}$ define the operator $\cL_{ij,t}:\A_\infty(U)\to\A_\infty(U)$ by
\begin{equation}\label{eq:L.ij}
	(\cL_{ij,t}w)(z)=p_{ij}e^{t\log\left\vvvert M_jz\right\vvvert}w(\widehat{M}_jz),
\end{equation}
and $\cL_t:\Ak(U)\to\Ak(U)$ by 
\begin{equation}\label{eq:to.L.Ak}
	\cL_t=\sum_{1\le i, j\le k}\mathcal{E}_i\cL_{ij,t}\mathcal{P}_j ,
\end{equation}
where $\mathcal{E}_i, \mathcal{P}_j$ are the lifting map and the projection map respectively defined in the previous section.
Here $\left\vvvert\cdot\right\vvvert$ denotes the natural extension of the $\ell_2$-norm on $\mathbb{R}^d$ to $\mathbb{C}^d$ by replacing real variables by complex ones. 
 More specifically, for $z=(z_2,\dots, z_d)\in U$ and a positive matrix $A=(a_{ij})_{1\le i, j\le d}$, 
 \begin{equation}\label{eq:vvvert}
 \log\left\vvvert Az\right\vvvert=\dfrac{1}{2}\log\left[\sum_{i=1}^d\left( a_{i1}+\sum_{j=2}^d a_{ij}z_j\right)^2 \right].
 \end{equation}
The matrix entries are all positive, and the domain $U$ has been chosen to guarantee that the function inside logarithm does not take values in the negative real line or zero. Thus, this function does indeed lie in $\mathcal{A}_\infty(U)$. Moreover, it is not hard to see this definition coincides with $\log\|Az\|$ when $z$ is real, where $\|\cdot\|$ denotes the $\ell_2$-norm.

Note that even in the case when the random product is i.i.d., so that $p_{ij}$ does not depend on $i$, our definition is slightly different from the one originally given by Pollicott in \cite{Pollicott}. The reason to choose this definition is because this is simpler in terms of forms and complexity of computations, and it serves the purpose in the same way as Pollicott's original definition.

Now our plan is to first show that $\{\cL_t\}_{t\in W}$ is an analytic family of bounded operators in \autoref{lm:ana.family}, where $W$ is an open set in $\mathbb{C}$. That is, for every $t_0\in W$ we can find $\epsilon > 0$ and a sequence $\mathcal{S}_n$ of bounded operators such that $\sum_{n= 0}^\infty \epsilon^n \|\mathcal{S}_n\| < \infty$, and for $t\in W$ with $|t - t_0| < \epsilon$, 
\[
\mathcal{L}_t = \sum_{n= 0}^\infty (t - t_0)^n \cS_n.
\]
Then by \cite[VII.6]{D.Sch} or \autoref{lm:ana.perturb} below, the analytic perturbation preserves the spectral properties which we have already proved for $\cL_0=\mathcal{M}$ in \autoref{thm:spec.M}, we can obtain an isolated eigenvalue of maximal modulus $\beta(t)$ for $\cL_t$, which can be used to generate the Lyapunov exponent by \autoref{thm:L.beta.le}.

\begin{lemma}\label{lm:ana.family}
There exists $\gamma > 0$ such that $\{ \cL_t : |t| < \gamma\}$ is an analytic family of bounded operators on $\Ak(U)$.
\end{lemma}
\begin{proof}
We first show $\cL_t$ is a bounded operator on $\Ak(U)$. Note for each $\bw\in\Ak(U)$
$$
  \|\cL_t\bw\| = \sum_{i=1}^k q_i \left \| \sum_{j=1}^k\cL_{ij,t}w_j \right\| 
  	\le\max_j e^{|t|\eta(M_j)} \|\bw\|,
$$
where $\eta(M):=\sup_{z\in U}\left |\log\left\vvvert Mz\right\vvvert \right|<\infty,$ owing to $\log\left\vvvert Mz\right\vvvert$ being holomorphic on the bounded domain $U$.

We now define operator $\cS_{ij, n}$ acting on $\A_\infty(U)$ by 
\[
(\cS_{ij, n}w)(z) = \dfrac{p_{ij}\left(\log\left\vvvert M_j z \right\vvvert\right)^n}{n!}w(\widehat{M}_jz),
\]
and $\cS_n$ on $\Ak(U)$ by
\[
\cS_n = \sum_{1\le i, j \le k} \mathcal{E}_i \cS_{ij, n} \mathcal{P}_j,
\]
and proceed to show that $\cS_n$ satisfies the definition of an analytic family of bounded operators for $\cL_t$. We have assumed $t_0=0$ without loss of generality.
Note that for $w\in \A(U)$, we have
\begin{align*}
\|\cS_{ij, n}w\| \le p_{ij} \dfrac{\eta(M_j)^n}{n!}\|w\|.
\end{align*}
Thus $\cS_{ij, n}$ is a bounded operator on $\A(U)$. Moreover, for $\bw \in \Ak(U)$
$$
\|\cS_n\bw\|  = \sum_{i = 1}^k q_i \left\| \sum_{j=1}^k \cS_{ij, n} w_j\right\| 
 \le \eta_{\max}^n \cdot\dfrac{\|\bw\|}{n!},
$$
where $\eta_{\max} = \max_j \eta(M_j)$. Thus $\|\cS_n\|\le \eta_{\max}^n / n!$
and for all $\epsilon>0$
\[
\sum_{n=0}^\infty \epsilon^n \|\cS_n\| \le e^{\epsilon \eta_{\max}} < \infty.
\]

Using the Taylor series remainder formula we have 
\[
\left \| \cL_t - \sum_{n= 0}^N t^n \cS_n \right\|  \le \dfrac{(|t|\eta_{max})^{N+1}}{(N+1)!} e^{|t|\eta_{max}}.
\]
By choosing $\gamma = 1/ \eta_{max}$, it follows that $\cL_t = \sum_{n= 0}^\infty t^n \cS_n$ on $\{t\in\mathbb{C} : |t| < \gamma\}$.
\end{proof}

Recall in \autoref{thm:spec.M}, since the eigenvalue $1$ of the operator $\mathcal{M} = \cL_0$ is isolated, after choosing a sufficiently small open neighbourhood $W$ of $1$ which does not intersect with the rest of the spectrum, $\M$ can be written in the following form 
\[
\M = \mathcal{N} +\mathcal{Q},
\]
where
\[
\mathcal{N} = \dfrac{1}{2\pi \imag}\int_{\partial W} (z I - \mathcal{M})^{-1}\diff z
\]
is the rank-one Riesz projection onto the unique eigenfunction of $\M$ corresponding to the eigenvalue $1$, and $\mathcal{Q} = \M - \mathcal{N}$ has spectral radius strictly smaller than 1. Moreover, ${\mathcal{N}\mathcal{Q} = \mathcal{Q}\mathcal{N} = 0}$ (see e.g. \cite[VII.3]{D.Sch}).
The following general lemma states that this decomposition is preserved under analytic perturbation. 
Proofs can be found in \cite[VII.6]{D.Sch} or \cite[V.3.2]{Bougerol}.
\begin{lemma}\label{lm:ana.perturb}
Let  $\{T(\xi)\}_{\xi\in W}$ be an analytic family of bounded operators on the Banach space $B$ where $W$ is a neighbourhood of $0$ in $\mathbb{C}$. If for some rank-one operator $N$, we have 
\[
\rho = \limsup_{n\to\infty} \|T(0)^n - N\|^{1/n} < 1.
\]
Then we can find $\epsilon > 0$ such that for $|\xi| < \epsilon$, 
\[
T(\xi) = \lambda(\xi) N(\xi) + Q(\xi)
\]
where 
\begin{enumerate}[label = (\alph*)]
\item $\lambda(\xi)$ is the unique eigenvalue of maximal modulus of $T(\xi)$;
\item $N(\xi)$ is a rank-one projection such that $N(\xi)Q(\xi) = Q(\xi)N(\xi) = 0$;
\item The mappings $\lambda(\cdot), \ N(\cdot)$ and $Q(\cdot)$ are analytic;
\item $|\lambda(\xi)| \ge (2+\rho)/3$, and for each $m\in\mathbb{N}$ there exists $c > 0$ such that for each $n\in\mathbb{N}$, 
\[
\left\|\dfrac{\diff^m}{\diff\xi^m}Q^n(\xi) \right\| \le c \left(\dfrac{1+ 2\rho}{3} \right)^n.
\]
\end{enumerate}
\end{lemma}

\begin{thm}\label{thm:ana.perturb.Lt}
Let $\cL_t:\Ak(U)\to\Ak(U)$ be defined as in \eqref{eq:to.L.Ak} for $t\in\mathbb{C}$. Then when $|t|$ is sufficiently small, we can write
\[
\cL_t = \beta(t)\mathcal{N}(t) + \mathcal{Q}(t),
\]
where $\beta(t)$ is the unique eigenvalue of maximal modulus of $\cL_t$, $\mathcal{N}(t)$ is a rank-one projection, $\mathcal{Q}(t)$ has spectral radius strictly smaller than $\beta(t)$, and $\mathcal{N}(t)\mathcal{Q}(t) = \mathcal{Q}(t)\mathcal{N}(t) = 0$.  $\beta(t), \ \mathcal{N}(t)$ and $\mathcal{Q}(t)$ are analytic with respect to $t$.

When $t = 0$, we have $\beta(0) = 1$ and for $\bbf = (f_1, \dots, f_k) \in \cont{V}$
\[
\mathcal{N}(0)\bbf = \int \bbf\diff\nu^*,
\]
where $\nu^*$ is the invariant measure given by \autoref{thm:spec.M} \ref{thm.M.item:prob.meas}
and $\|\mathcal{Q}'(0)\|$ is bounded.
\end{thm}
\begin{proof}
By \autoref{lm:ana.family}, by choosing $W = \{|t| < \gamma\}$ as a neighbourhood of $0$ in $\mathbb{C}$, we have $\{\cL_t\}_{t\in W}$ is an analytic family of bounded operators on $\Ak(U)$. 

By \autoref{thm:spec.M}, we have the decomposition $\cL_0 = \mathcal{M} = \mathcal{N} + \mathcal{Q}$ with the spectral radius 
\[
\rho(Q) = \lim_n \|\mathcal{Q}^n\|^{1/n}=\lim_n \|\mathcal{M}^n - \mathcal{N}\|^{1/n} < 1.
\]
The conclusion follows then directly from \autoref{lm:ana.perturb}.
\end{proof}

Now we are ready to prove our main result relating $\beta(t)$ to the Lyapunov exponent, which is the foundation for the algorithm that we formulate as \autoref{thm:algorithm.mkv}. Similar theorems for the i.i.d. cases can be found in e.g. \cite{clt}, \cite[V.5.2]{Bougerol}, and \cite{Pollicott}.
\begin{thm}\label{thm:L.beta.le}
Given a Markovian product of positive matrices (that is, specifying a finite matrix set, a $k\times k$ stochastic matrix, and an initial probability vector), let $\cL_t$ be the associated operator on $\Ak(U)$ defined as in \eqref{eq:to.L.Ak} and $\beta(t)$ be its maximal eigenvalue as given by \autoref{thm:ana.perturb.Lt}. Then 
$\beta$ is differentiable at 0, and $\beta'(0)$ equals the Lyapunov exponent corresponding to this random matrix product.
\end{thm}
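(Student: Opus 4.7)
The plan is to combine analytic perturbation theory for the simple isolated eigenvalue $\beta(t)$ with a probabilistic interpretation of iterates of $\cL_t$.

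First, I would establish differentiability of $\beta$ at $0$. The family $t\mapsto\cL_t$ depends analytically on $t$, since it enters only through the weights $e^{t\log\|M_j z\|}$, which are analytic in $t$. By \autoref{thm:spec.L}\ref{thm.L.item:eigen}, \ref{thm.L.item:spec}, $\beta(0)=1$ is a simple isolated eigenvalue of $\cL_0=\M$, with one-dimensional eigenspace spanned by $\ok$. Classical analytic perturbation theory for isolated simple eigenvalues (as in Kato) then implies that $\beta(t)$, the corresponding top eigenfunction $\bh_t$, and the left eigenfunctional are all analytic in $t$ on a neighbourhood of $0$; in particular $\beta'(0)$ exists.

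Next, to compute $\beta'(0)$ I would differentiate $\cL_t\bh_t=\beta(t)\bh_t$ at $t=0$ and pair with the left eigenfunctional $\nu=\oplus_i q_i\nu_i$ from \autoref{thm:spec.M}\ref{thm.M.item:lin.fnl}. Using $\nu\M=\nu$, $\nu(\ok)=1$, and $\bh_0=\ok$, the contribution of the derivative of $\bh_t$ cancels, leaving
\[
\beta'(0)=\nu\bigl(\dot{\cL}_0\ok\bigr)=\sum_{i,j=1}^k q_ip_{ij}\int_{\overline V}\log\|M_jz\|\,d\nu_i^*(z),
\]
where $\dot{\cL}_0:=\tfrac{d}{dt}\big|_{t=0}\cL_t$ and the $\nu_i^*$ are the probability measures from \autoref{thm:spec.M}\ref{thm.M.item:prob.meas}.

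To identify this quantity with the Lyapunov exponent, I would use the probabilistic picture underlying the $\nu_i^*$. The stationarity relation implicit in \autoref{thm:spec.M}\ref{thm.M.item:prob.meas} is
$\sum_i q_ip_{ij}(\widehat M_j)_*\nu_i^*=q_j\nu_j^*$, which says exactly that $\mu_*(i,B):=q_i\nu_i^*(B)$ is the stationary law of the projective Markov chain $(A_n,y_n)$ on $\{M_1,\dots,M_k\}\times\overline V$ with transition $(i,y)\mapsto(j,\widehat M_jy)$ of probability $p_{ij}$. Combined with the cocycle telescoping
\[
\log\|S_ny_0\|-\log\|y_0\|=\sum_{k=1}^n\bigl(\log\|A_ky_{k-1}\|-\log\|y_{k-1}\|\bigr),
\]
Birkhoff's ergodic theorem applied to $(A_n,y_n)$ (ergodicity being supplied by \autoref{prop:dH.contra}) then expresses $\gamma=\lim n^{-1}\mE[\log\|S_ny_0\|]$ as an $\mu_*$-integral whose integrand matches the formula for $\beta'(0)$ once the invariance equation for $\nu^*$ is used to handle the $\log\|y_{k-1}\|$-compensations.

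The main obstacle I anticipate is precisely this last step: reconciling the $\mu_*$-integrand $\log\|M_jz\|$ appearing in $\beta'(0)$ with the Birkhoff integrand $\log\|A_ky_{k-1}\|-\log\|y_{k-1}\|$ arising from the Lyapunov cocycle. The key algebraic tool is the invariance equation for $\{\nu_i^*\}$, which after a change of variables forces the $\log\|y\|$ contribution to cancel; uniform contraction (\autoref{prop:dH.contra}) is then used to justify the uniform convergence needed both to apply Birkhoff's theorem and to exchange limit and derivative in the relation $n\log\beta(t)=\log\nu(\cL_t^n\ok)$.
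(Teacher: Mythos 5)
Your proposal is correct in substance, but it follows a genuinely different route from the paper. The paper does not compute $\beta'(0)$ by first-order perturbation of the eigenvalue equation; instead it introduces the initial-distribution operator $\cL^{(0)}$, observes that $\cL^{(0)}\cL_t^n\ok=\mE[e^{t\log\|S_n(\om)x\|}]$, splits $\cL_t=\beta(t)Q(t)+R(t)$ with $Q(t)$ the rank-one spectral projection and $R(t)$ of spectral radius $<\beta(t)$, and then obtains $\gamma=\lim_n \tfrac1n\tfrac{d}{dt}\big|_{t=0}\bigl(\beta(t)^n\cL^{(0)}Q(t)\ok+\cL^{(0)}R(t)^n\ok\bigr)=\beta'(0)$, delegating the differentiability of $\beta$, $Q$, $R$ to perturbation theory as in Pollicott/Bougerol. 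Your route --- Kato perturbation giving $\beta'(0)=\nu(\dot\cL_0\ok)=\sum_{i,j}q_ip_{ij}\int\log\|M_jz\|\,d\nu_i^*(z)$, then a Furstenberg-type identification of this integral with $\gamma$ via the stationary law $q_i\nu_i^*$ of the projective pair chain and telescoping --- is a legitimate alternative; it buys an explicit stationary-measure formula for $\gamma$ and makes the differentiability step cleaner than the paper's, whereas the paper's argument stays entirely inside the operator framework (no ergodic-theoretic input, no identification of $\nu_i^*$ as the law of a chain) and directly exhibits the objects $\beta(t),Q(t),R(t)$ reused in the algorithm.

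Two points in your sketch need care. First, the reconciliation you flag as the main obstacle hinges on how the weight $\log\|M_jz\|$ in \eqref{eq:L.ij} is read: for the iterates of $\cL_t$ to reproduce the cocycle $\log\|S_nz\|$ (which both the paper's proof and the trace formula require), the ``norm'' must be the cocycle-compatible functional $z\mapsto(M_jz)_1$ on representatives in $R_+^{d-1}$ (equivalently, the weight $\log\bigl(\|M_jz\|/\|z\|\bigr)$). Your cancellation via the invariance relation $\sum_iq_ip_{ij}(\widehat M_j)_*\nu_i^*=q_j\nu_j^*$ does show that the Birkhoff integrand $\log\|M_jz\|-\log\|z\|$ integrates to $\sum_{i,j}q_ip_{ij}\int\log(M_jz)_1\,d\nu_i^*$, so the identification with $\beta'(0)$ holds precisely under this reading; with a genuine norm of the non-unit representative the two quantities would differ by $\sum_iq_i\int\log\|z\|\,d\nu_i^*$, so this convention must be stated. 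Second, invoking Birkhoff's theorem requires handling the fact that the chain is started from $\mathbf{p}_0$, not from the stationary law, and that the pair chain be ergodic; you can avoid both issues by replacing Birkhoff with the paper's own uniform convergence $\M^n\bbf\to\int\bbf\,d\nu^*$ (\autoref{thm:spec.M}~\ref{thm.M.item:convg.real}) applied to $f_i(z)=\sum_jp_{ij}\log(M_jz)_1$, which gives convergence of the Ces\`aro averages of expectations for any initial distribution.
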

\begin{proof}
For each $i \in \{1,\dots,k \}$ define $\cL^{(0)}_i:\A_\infty(U) \to\A_\infty(U)$ by
$$
(\cL_i^{(0)} w)(z)=p_i e^{t\log\left\vvvert M_iz\right\vvvert}w(\widehat{M}_iz), 
$$
where $\mathbf{p}_0=(p_1,\dots,p_k)$ is the initial probability vector. Let 
\[\cL^{(0)}:=\sum_{i=1}^k \cL^{(0)}_i\cP_i:\Ak(U)\to\A_\infty(U).\]

Now we have for each $w\in\Ak(U)$,
\begin{align*}
(\cL^{(0)} & \cL_t^n\bw)(z)=\sum_{i_0,i_1,\dots, i_n}(\cL_{i_0}^{(0)}\cL_{i_0i_1}\cL_{i_1i_2}\cdots\cL_{i_{n-1}i_n}w_{i_n})(z) \\
&=\sum_{i_0,i_1,\dots,i_n} p_{i_0}p_{i_0i_1}\cdots p_{i_{n-1}i_n} e^{t\log\left\vvvert M_{i_n}\cdots M_{i_0}z\right\vvvert}w_{i_n}(\widehat{M}_{i_n}\cdots\widehat{M}_{i_0}z).
\end{align*}
Applying this to $\ok(V)$ on $V$ we obtain
$(\cL^{(0)}\cL_t^n\ok)(x)=\mE[e^{t\log\|S_n(\omega) x\|}]$ for $x\in V$, where $S_n(\omega) = M_{i_n}\dots M_{i_0}$ denotes the partial product of the first $n+1$ matrices.

By \autoref{thm:ana.perturb.Lt} we can write $\cL_t=\beta(t)\mathcal{N}(t)+\mathcal{Q}(t)$, 
and $\mathcal{Q}(t)=\cL_t-\beta(t)\mathcal{N}(t)$ has spectral radius strictly smaller than $\beta(t)$ with $\mathcal{Q}(t)\mathcal{N}(t)=\mathcal{N}(t)\mathcal{Q}(t)=0$. Then we have
\begin{equation}\label{eq:decomp.exp.exp}
\mE[e^{t\log\|S_n(\omega)x\|}]=(\beta(t)^n \cL^{(0)}\mathcal{N}(t)\ok+\mathcal{Q}(t)^n\ok)(x).
\end{equation}
Letting $\gamma$ be the Lyapunov exponent associated to this problem, then by \eqref{eq:Sn.x} we have
\begin{align*}
	\gamma & =\lim_{n\to\infty}\dfrac{1}{n}\mE[\log\|S_n(\omega)x\|] \\
	& = \lim_{n\to\infty} \dfrac{1}{n}\frac{\dif\phantom{t}}{\dif t} \left( \beta(t)^n \cL^{(0)}\mathcal{N}(t)\ok+\mathcal{Q}(t)^n\ok \right)(x)\bigg|_{t= 0} \\
	& = \lim_{n\to\infty} \dfrac{1}{n} \left(n\beta(0)^{n-1}\beta'(0)\mathcal{L}^{(0)}\mathcal{N}(0)\ok + \beta(0)^n \mathcal{L}^{(0)}\mathcal{N}'(0)\ok +  n\mathcal{Q}(0)^{n-1}\mathcal{Q}'(0)\ok \right)(x),
\end{align*}
where in turn we apply the definition \eqref{eq:defn.le} of the Lyapunov exponent and equation \eqref{eq:decomp.exp.exp}.
To finish the proof, notice by the result of \autoref{thm:ana.perturb.Lt} that the spectral radius at 0 is $\beta(0) = 1$, with $\mathcal{N}(0)\ok = \ok$. Since $\mathcal{N}(t)$ and $\mathcal{Q}(t)$ are analytic with respect to $t$, we have that both $\mathcal{N}'(0)$ and $\mathcal{Q}'(0)$ are bounded. Moreover, $\mathcal{Q}(0)$ has spectral radius strictly smaller than $\beta(0)=1$, thus $\mathcal{Q}(0)^{n-1}$ decays exponentially in $n$. These conclude that $\gamma = \beta'(0)$.
\end{proof}

Finally we prove that $\cL_t$ is nuclear of order zero. We need this to define the determinant function for $\cL_t$.
\begin{thm}\label{thm:L.nuclear0}
Given a Markovian product of positive matrices (that is, specifying a finite matrix set, a $k\times k$ stochastic matrix, and an initial probability vector), let $\cL_t$ be the associated operator on $\Ak(U)$ defined as in \eqref{eq:to.L.Ak}. Then $\cL_t$ is nuclear of order zero.
\end{thm}

\begin{proof}
The proof is essentially the same as that of \autoref{thm:M.nuclear0} as each  $\cL_{ij, t}$ is nuclear of order zero with the same arguments from \cite{Ruelle}.
\end{proof}

\section{The Trace, the Determinant, and Bai-Pollicott Algorithm}
\label{sec:alg}

By \autoref{thm:L.beta.le} the problem of computing top
Lyapunov exponents has been converted to the computation of $\beta(t)$, the top eigenvalue of the Markovian transfer operator $\cL_t$. Note that by
\autoref{thm:L.nuclear0} $\cL_t$ is a nuclear operator of order zero. Therefore the Fredholm determinant
${\det(I-z\cL_t)}$ is an entire function with respect to $z$ \cite[II. pp16.]{Grothendieck1}, and so the trace-determinant formula
\begin{align}\label{eq:detexp}
	\det(\mathcal{I}-z\cL_t)=\exp\left(-\sum_{n=1}^\infty\dfrac{\tr(\cL_t^n)}{n}z^n\right)
\end{align}
provides an explicit expansion through which we may 
calculate arbitrarily precise approximations via the computation of $\tr(\cL_t^n)$ for each $n\ge 1$.

Note in books such as \cite[Chap V]{GGK}, the approximation property of Banach spaces is required to extend the definitions of traces and determinants from finite rank operators to nuclear operators, and unfortunately $\Ak(U)$ is not known to have the approximation property. However, in \cite[II.1]{Grothendieck1}, a general determinant function is defined for nuclear operators on locally convex spaces with the representation $u=\sum \lambda_n x_n'\otimes x_n$ as the formal series
\[
f(z) = 1 - za_1(u) + z^2a_2(u) - \dots + (-1)^n z^n a_n(u) + \cdots,
\]
where
\[
a_n(u) = \sum_{i_1< \dots < i_n} \lambda_{i_1}\cdots \lambda_{i_n}\det (\langle x_{i_\alpha}, x_{i_\beta}'\rangle)_{1\le\alpha,\beta\le n}.
\]
This definition coincides with the trace-determinant formula \eqref{eq:detexp} when $\tr(u)$ is well-defined. In the following results, we will see $\tr(\mathcal{L}_t)$ is well-defined, and an explicit formula for $\tr(\mathcal{L}_t)$ is given by \autoref{thm:tr.L.Ak}.

\begin{lemma}\label{lemma:tr.TO}
Let $\mathcal{L}$ be an operator on $\AU$ given by
\[
(\mathcal{L}w)(z) = \varphi(z)w(\psi(z)),
\]
where $\varphi(z)\in\AU$ and $\psi(z)$ is a holomorphic function mapping $U\subset \mathbb{C}^d$ strictly inside $U$. Then the trace of $\mathcal{L}$ is well-defined and can be given by
\[
\tr(\mathcal{L}) = \dfrac{\varphi(t)}{\det(I - D\psi(t))},
\]
where $t$ is the unique fixed point of $\mathcal{L}$.
\end{lemma}
\begin{proof}
A proof using Fredholm theory can be found in \cite[Lemma2]{Ruelle}. Alternatively, one can prove that the eigenvalues of $\mathcal{L}$ are exactly
\[
\varphi(t)\gamma_1^{n_1}\cdots\gamma_d^{n_d}, \quad n_1,\dots, n_d\ge 0,
\]
where $t$ is the unique fixed point of $\mathcal{L}$ (whose existence and uniqueness are given by \cite[Lemma1]{Ruelle}) and $\{\gamma_i\}_{1\le i\le d}$ are the eigenvalues (with multiplicities) of $D\psi(t)$. (Proof of this result can be found in \cite{May2} when $d = 1$, and in \cite[Theorem 5.5.4]{FanThesis} for the general case.) By the $2/3$-nuclear theorem \cite[II. pp18]{Grothendieck1}, the sum of the eigenvalues equals the trace for $\mathcal{L}$ since it is nuclear operator of order zero. It is easy to see that those eigenvalues sum up to the given formula.
\end{proof}

\begin{lemma}\label{lemma:tr.L.ij}
Let $\cL$ be a nuclear operator on $\A_\infty(U)$. Then 
\[
	\tr(\cE_i\cL\cP_j)=\delta_{ij}\tr(\cL),
\]
where $\delta_{ij}$ is the Kronecker symbol, taking the value $1$ when $i=j$ and $0$ otherwise.
\end{lemma}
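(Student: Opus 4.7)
The plan is to exploit the cyclic property of the trace for nuclear operators. The key elementary observation is that $\cP_j \cE_i = \delta_{ij}\, \id_{\A_\infty(U)}$: the map $\cE_i$ lifts $w \in \A_\infty(U)$ to the vector in $\Ak(U)$ whose $i$-th entry is $w$ and whose other entries are zero, and then $\cP_j$ extracts the $j$-th entry, which is $w$ exactly when $i=j$ and $0$ otherwise.

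With this in hand, I would decompose $\cE_i \cL \cP_j = A B$ where $A := \cE_i : \A_\infty(U) \to \Ak(U)$ is bounded and $B := \cL \cP_j : \Ak(U) \to \A_\infty(U)$ is nuclear (being the composition of the bounded projection $\cP_j$ with the nuclear operator $\cL$, cf.\ \cite{Grothendieck1}). Then the cyclic identity for the trace of nuclear operators composed with bounded operators (cf.\ \cite[Sec.~5.3]{GGK} or \cite{Grothendieck1}) yields
\[
\tr(\cE_i \cL \cP_j) = \tr(AB) = \tr(BA) = \tr(\cL \cP_j \cE_i) = \tr(\delta_{ij} \cL) = \delta_{ij} \tr(\cL).
\]

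Alternatively, and more concretely, I could verify the claim directly from a nuclear representation. Write $\cL = \sum_n \phi_n \otimes f_n$ with $\phi_n \in (\A_\infty(U))'$, $f_n \in \A_\infty(U)$, and $\sum_n \|\phi_n\|\|f_n\| < \infty$, so that $\tr(\cL) = \sum_n \phi_n(f_n)$. Then $\cE_i \cL \cP_j$ admits the nuclear representation $\sum_n (\phi_n \circ \cP_j) \otimes \cE_i(f_n)$ on $\Ak(U)$, and the trace is
\[
\sum_n (\phi_n \circ \cP_j)(\cE_i(f_n)) = \sum_n \phi_n(\cP_j \cE_i(f_n)) = \delta_{ij} \sum_n \phi_n(f_n) = \delta_{ij}\tr(\cL),
\]
again using $\cP_j \cE_i = \delta_{ij}\,\id$.

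There is no real obstacle here; the only subtlety is being careful that the trace-cyclicity argument applies to operators between the two different Banach spaces $\A_\infty(U)$ and $\Ak(U)$. Since Grothendieck's theory of nuclear operators is developed precisely in this generality, this presents no difficulty, and the direct calculation via a nuclear decomposition serves as a safety net.
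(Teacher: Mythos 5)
Your proposal is correct, and its second (``safety net'') computation is essentially the paper's own proof: the paper likewise writes $\cL$ in a nuclear (Schmidt) representation, notes that $\cE_i\cL\cP_j$ then has the representation with functionals pulled back by $\cP_j$ and vectors pushed forward by $\cE_i$, and concludes via $\cP_j\cE_i=\delta_{ij}\id$ on $\A_\infty(U)$. Your first argument via cyclicity of the trace, $\tr(\cE_i\cL\cP_j)=\tr(\cL\cP_j\cE_i)$, is a valid equivalent shortcut (granting, as you note, that Grothendieck's trace theory applies to the composition across the two Banach spaces), but it does not change the substance of the argument.
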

\begin{proof}
Assume $\cL$ has the Schmidt representation $\cL=\sum_{n=1}^\infty \rho_n u_n\otimes f_n$, with the trace given by
\[
	\tr(\cL)=\sum_{n=1}^\infty \rho_n f_n(u_n),
\]
by Fredholm theory \cite[II.1]{Grothendieck1}.
Then 
\[
	\cE_i\cL\cP_j=\sum_{n=1}^\infty \rho_n (\cE_i u_n)\otimes (\cP_j^*f_n),
\]
and Fredholm theory also implies
\[
	\tr(\cE_i\cL\cP_j)=\sum_{n=1}^\infty \rho_n(\cP_j^*f_n)(\cE_i u_n)=\sum_{n=1}^\infty \rho_n f_n(\cP_j\cE_i u_n)=\delta_{ij}\tr(\cL).
\]
\end{proof}

\begin{thm}\label{thm:tr.L.Ak}
Let $\cL_t$ be the operator on $\Ak(U)$ given by \eqref{eq:to.L.Ak}, then
\begin{equation}\label{eq:tr.L}
\tr(\cL_t)=\sum_{i=1}^k\tr(\cL_{ii,t})= \sum_{i=1}^k \dfrac{p_{ii}\lambda_{i}^t}{\det(I-D\widehat{M}_i(s_i))}.
\end{equation}
Moreover, 
\begin{equation}\label{eq:tr.L.n}
\tr(\cL_t^n)=\sum_{|\underline{i}|=n}\tr(\cL_{i_1i_2,t}\cdots\cL_{i_{n-1}i_n,t}\cL_{i_ni_1,t})=\sum_{|\underline{i}|=n}\dfrac{p^*_{\underline{i}}\lambda^t_{\underline{i}}}{\det(I-D\widehat{S}_{\underline{i}}(s_{\underline{i}}))},
\end{equation}
where for each $n$-length sequence $\underline{i}=(i_1,\dots, i_n)$, $p^*_{\underline{i}}$ denotes the \textbf{cyclic} probability $p_{i_1i_2}\cdots p_{i_{n-1}i_n}p_{i_ni_1}$, $\widehat{S}_{\underline{i}}$ denotes the product $\widehat{M}_{i_n}\cdots\widehat{M}_{i_1}$, $s_{\underline{i}}$ denotes the unique fixed point of $\widehat{S}_{\underline{i}}$ and $\lambda_{\underline{i}}$ denotes the top eigenvalue of $S_{\underline{i}}:=M_{i_n}\cdots M_{i_1}$.
\end{thm}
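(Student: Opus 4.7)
\bigskip\noindent\textbf{Proof proposal.}
The plan is to reduce $\tr(\cL_t^n)$ to a sum of traces of scalar weighted composition operators on $\A_\infty(U)$, and then invoke the classical Ruelle--Grothendieck trace formula for such operators on spaces of analytic functions.

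First I would expand $\cL_t^n$ using the definition \eqref{eq:to.L.Ak} together with the elementary relation $\cP_j \cE_i = \delta_{ij} \id$. A straightforward induction gives
\[
\cL_t^n = \sum_{i_0, i_1, \dots, i_n} \cE_{i_0}\, \cL_{i_0 i_1, t}\, \cL_{i_1 i_2, t}\, \cdots\, \cL_{i_{n-1} i_n, t}\, \cP_{i_n}.
\]
Applying \autoref{lemma:tr.L.ij} to each summand collapses the sum to indices with $i_0 = i_n$, and after a cyclic relabelling this yields
\[
\tr(\cL_t^n) = \sum_{i_1, \dots, i_n} \tr(\cL_{i_1 i_2, t}\, \cL_{i_2 i_3, t}\, \cdots\, \cL_{i_{n-1} i_n, t}\, \cL_{i_n i_1, t}),
\]
giving the first equality in both \eqref{eq:tr.L} and \eqref{eq:tr.L.n}.

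Next I would verify that each cyclic product $\cL_{i_1 i_2, t} \cdots \cL_{i_n i_1, t}$ is itself a weighted composition operator on $\A_\infty(U)$: iterating the definition produces
\[
\bigl(\cL_{i_1 i_2, t} \cdots \cL_{i_n i_1, t} w\bigr)(z) = p^*_{\underline{i}}\, \Phi_{\underline{i}}(z)\, w(T_{\underline{i}} z),
\]
where $T_{\underline{i}}$ is the composition of the maps $\widehat{M}_{i_k}$ (in the order forced by right-to-left operator application) and $\Phi_{\underline{i}}(z)$ accumulates the exponential weights along the orbit. By \autoref{prop:dH.contra}\ref{item:hat.contra}, $T_{\underline{i}}$ maps $U$ strictly inside itself, hence has a unique fixed point $z^*_{\underline{i}}$. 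By cyclic invariance, $z^*_{\underline{i}}$ represents the same projective class as $s_{\underline{i}}$ (the fixed point of $\widehat{S}_{\underline{i}}$), and the differentials $DT_{\underline{i}}(z^*_{\underline{i}})$ and $D\widehat{S}_{\underline{i}}(s_{\underline{i}})$ are related by conjugation, so their Fredholm determinants agree.

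I would then invoke the Ruelle--Grothendieck trace formula (see \cite{Ruelle,Grothendieck1}): for a nuclear weighted composition operator $(\cM w)(z) = \phi(z) w(Tz)$ on $\A_\infty(U)$ with $T$ mapping $U$ strictly inside $U$ and having a unique fixed point $z^*$, one has $\tr(\cM) = \phi(z^*)/\det(I - DT(z^*))$. Applied to each cyclic product this gives
\[
\tr(\cL_{i_1 i_2, t} \cdots \cL_{i_n i_1, t}) = \dfrac{p^*_{\underline{i}}\, \Phi_{\underline{i}}(z^*_{\underline{i}})}{\det(I - D\widehat{S}_{\underline{i}}(s_{\underline{i}}))},
\]
and the case $n = 1$ similarly produces \eqref{eq:tr.L} with fixed point $s_i$ and eigenvalue $\lambda_i$.

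The main obstacle I anticipate is showing $\Phi_{\underline{i}}(z^*_{\underline{i}}) = \lambda_{\underline{i}}^t$. The exponent is a telescoping cocycle sum of the form $t \sum_{k=1}^n \log \bigl\|M_{i_k}\, \widehat{M}_{i_{k+1}} \cdots \widehat{M}_{i_{k-1}}\, z^*_{\underline{i}}\bigr\|$ with cyclic indices. Using that each $\widehat{M}_j z$ equals $M_j z$ divided by its first coordinate, and that $s_{\underline{i}}$ represents the Perron eigendirection of $S_{\underline{i}}$ (so $S_{\underline{i}} v = \lambda_{\underline{i}} v$ for a representative $v$ of $s_{\underline{i}}$), the rescaling factors telescope and the sum collapses to $t \log \lambda_{\underline{i}}$. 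The only real care is in verifying that the normalisation implicit in the weight $\log\|M_j z\|$ is compatible with the first-coordinate normalisation used to define $\widehat{M}_j$, so that this telescoping is clean; this bookkeeping is where the bulk of the computational effort lies.
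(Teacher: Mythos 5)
Your proposal follows essentially the same route as the paper: expand $\cL_t^n$ using $\cP_j\cE_i=\delta_{ij}\id$, collapse to cyclic terms via \autoref{lemma:tr.L.ij}, recognise each cyclic product as a single weighted composition operator with composition map (a cyclic conjugate of) $\widehat{S}_{\underline{i}}$, and apply the Ruelle--Grothendieck fixed-point trace formula, with the weight telescoping to $\lambda_{\underline{i}}^t$ at the fixed point. The only difference is that you spell out the telescoping/normalisation bookkeeping that the paper asserts implicitly when it writes the composite weight as $e^{t\log\|S_{\underline{i}}x\|}$, so your argument is correct and, if anything, slightly more explicit.
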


\begin{proof}
The first equality of \eqref{eq:tr.L} follows from the definition \eqref{eq:to.L.Ak} and \autoref{lemma:tr.L.ij}. And the second follows from the trace formula \autoref{lemma:tr.TO}.

For \eqref{eq:tr.L.n}, we notice
\[\cL_t^n=\left(\sum_{i,j=1}^k\cE_i\cL_{ij,t}\cP_j\right)^n=\sum_{i_0,i_1,\dots,i_n}\cE_{i_0}\cL_{i_0i_1,t}\cdots\cL_{i_{n-1}i_n,t}\cP_{i_n}, \]
since $\cP_j\cE_i=\delta_{ij}\id_{\A_\infty(U)}$. Therefore by \autoref{lemma:tr.L.ij},
\[\tr(\cL_t^n)=\sum_{i_1,\dots, i_n}\tr(\cL_{i_ni_1,t}\cL_{i_1i_2,t}\cdots\cL_{i_{n-1}i_n,t}).\]
Note 
\[(\cL_{i_ni_1,t}\cL_{i_1i_2,t}\cdots\cL_{i_{n-1}i_n,t}w)(x)=p_{\underline{i}}^*e^{t\log\|S_{\underline{i}}x\|}w(\widehat{S}_{\underline{i}}x),\]
thus the rest follows from trace formula \autoref{lemma:tr.TO}.
\end{proof}

Having established the spectral properties of the Markovian transfer operators and the trace formula, we can follow the same approach as described in \cite{Pollicott} to obtain the estimate \eqref{eq:le.estimate}. By applying \eqref{eq:tr.L.n} for the explicit trace computation and applying \autoref{lemma:modified} for the explicit denominator computation, formulas \eqref{eq:an0}--\eqref{eq:tr/dtr} follow. 
It is not hard to see that these formulas do not require that any eigenvectors be computed.
\begin{thm}\label{thm:algorithm.mkv}
Given a finite set $\{M_i\}_{1\le i\le k}$ of positive and invertible $d\times d$ matrices, a $k\times k$ positive stochastic matrix $P=(p_{ij})_{1\le i,j\le k}$ and an initial probability vector $\mathbf{p}_0$. On a probability space $\probsp$, define a Markov chain $A(\om)$ of random matrices with 
\[
{\mP \bigl(A(\sigma\om)=M_j|A(\om)=M_i \bigr)=p_{ij}}
\]
for $1\le i, j\le k$. Let $\gamma$ be the Lyapunov exponent associated with this random matrix product problem. Then we have 
\[
\gamma=\dfrac{\sum_{n=1}^\infty a_n'(0)}{\sum_{n=1}^\infty n a_n(0)},
\]
where for each $n\ge 1$, $a_n(t)$ is given by 
\begin{align} \label{eq:an}
a_n(t)=\sum_{1\le l\le n}\dfrac{(-1)^l}{l!}\sum_{n_1+\cdots+n_l=n}\dfrac{\tr(\cL_t^{n_1})\cdots\tr(\cL_t^{n_l})}{n_1\cdots n_l},
\end{align}
and $\tr(\cL_t^m)$ is given by \autoref{thm:tr.L.Ak}, for each $m\ge 1$. Moreover, the $\alpha$-th estimate of $\gamma$ can be given by
\begin{equation}\label{eq:le.estimate}
\gamma^{(\alpha)}=\dfrac{\sum_{n=1}^\alpha a_n'(0)}{\sum_{n=1}^\alpha na_n(0)},
\end{equation}
where
\begin{align}
\label{eq:an0}
a_n(0) & =\sum_{1\le l \le n} \dfrac{(-1)^l}{l!}\sum_{n_1+\dots+n_l = n} \tau(n_1)\cdots \tau(n_l), \\
\label{eq:anp0}
a'_n(0) & = \sum_{1\le l \le n} \dfrac{(-1)^l}{l!}\sum_{n_1+\dots+n_l = n} \tau(n_1)\cdots \tau(n_l) \left(\delta(n_1)+\cdots +\delta(n_l)\right), \\
\label{eq:tr0}
\tau(k) & = \dfrac{1}{k} \sum_{|\underline{i}| = k}\dfrac{p_{\underline{i}}^*}{\prod_{j=2}^d \left(1- \lambda_{\underline{i}, j}/\lambda_{\underline{i}, 1}\right)}, \\
\label{eq:tr/dtr}
\delta(k) & = \left (\sum_{|\underline{i}| = k} \dfrac{p_{\underline{i}}^* \log(\lambda_{\underline{i}, 1})}{\prod_{j=2}^d \left(1- \lambda_{\underline{i}, j}/\lambda_{\underline{i}, 1}\right)} \right) \bigg/ \left (\sum_{|\underline{i}|= k} \dfrac{p_{\underline{i}}^*}{\prod_{j=2}^d \left(1- \lambda_{\underline{i}, j}/\lambda_{\underline{i}, 1}\right)} \right),
\end{align}
for each sequence $\underline{i} = (i_1, \dots, i_n)$, $\lambda_{\underline{i}, 1}$ denotes the top eigenvalue of the product $S_{\underline{i}} = M_{i_n}\dots M_{i_1}$ (it is well-defined as $S_{\underline{i}}$ is a positive matrix) and $\lambda_{\underline{i}, j} \ (2\le j \le d)$ denotes the other eigenvalues of $S_{\underline{i}}$, and $p_{\underline{i}}^*$ denotes the cyclic probability $p_{i_1i_2}\cdots p_{i_{n-1}i_n} p_{i_ni_1}$.
\end{thm}
\begin{proof}
Write $f(t, z) = \det(I- z\cL_t)$, then the maximal eigenvalue $\beta(t)$ satisfies $f(t, 1/\beta(t)) \equiv 0$. Here as $\beta(0) = 1$ by \autoref{thm:L.beta.le}, $\beta(t) \not = 0$ when $t$ is sufficiently close to $0$. Also $\beta(t)$ is differentiable at $0$ by \autoref{thm:L.beta.le}, therefore
\[
\beta'(0) = \dfrac{\frac{\partial}{\partial t}|_{t=0}f(t, 1)}{\frac{\partial}{\partial z}|_{z=1}f(0, z)}.
\]
Note since $z=1$ is a simple zero for $f(0, z)$ by \autoref{thm:spec.M}, the denominator cannot be zero. 

Since $f(t, z)$ is an entire function with respect to $z$, we can expand $f(t, z) = \sum_{n= 0} a_n(t) z^n$. And by \autoref{thm:L.beta.le}, the Lyapunov exponent can be given by
\[
\gamma = \beta'(0) = \dfrac{\sum_{n=1}^\infty a_n'(0)}{\sum_{n=1}^\infty na_n(0)},
\]
where the formula \eqref{eq:an} can be obtained by expanding \eqref{eq:detexp}. Finally, denote by 
\begin{align*}
\tau(k) & = \dfrac{1}{k}\tr(\cL_0^k), \\
\delta(k) & = \dfrac{\frac{\diff}{\diff t}|_{t=0}\tr(\cL_t^k)}{\tr(\cL_0^k)},
\end{align*}
then the rest follows from \autoref{thm:tr.L.Ak} and \autoref{lemma:modified}.
\end{proof}

\begin{rmk}
This algorithm contains Bai-Pollicott algorithm as a special case. Similar to Bai-Pollicott algorithm, the super-exponential convergence of the estimation, that is, 
\[
|\gamma^{(n)}-\gamma|=O(r^{n^{1+1/(d-1)}})
\] 
for some $0< r <1$, also follows from the fact that the transfer operator is nuclear of order zero (cf. \cite{Jenkinson} \cite[Section II.1]{Grothendieck1}).
\end{rmk}

\begin{rmk}
An area for further exploration is to generalize the results in this section to Markovian shifts over countable (or uncountable) symbols. Moreover, in the definition \eqref{eq:to.L.Ak}, we construct $\cL$ to be a $k\times k$ matrix of simple transfer operators. In fact, matrices of operators are intensively studied in operator K-theory (see e.g. \cite{wegge1993k}, \cite{murphy2014c})., and $\cL_t$ and $\M$ are in fact homotopic. We expect that there are deeper connections to be drawn between these two areas.
\end{rmk}

\section{Modified Determinant Lemma}
\label{sec:mod.lm}

The goal of this section is to provide a general formula to compute the denominator 
${\det(I-D\widehat{S}_{\underline{i}}(s_{\underline{i}}))}$ of \eqref{eq:tr.L.n}, where for each sequence ${\underline{i}=(i_1,i_2,\dots,i_n)}$, $\widehat{S}_{\underline{i}}$ denotes $\widehat{M}_{i_n}\widehat{M}_{i_{n-1}}\cdots \widehat{M}_{i_1}$ and $s_{\underline{i}}$ denotes the fixed point of $\widehat{S}_{\underline{i}}$.

In \cite{Pollicott}, Lemma 4.2 states the following identity
\[\det(I-D\widehat{S}_{\underline{i}}(s_{\underline{i}}))=\left(1-\dfrac{\det S_{\underline{i}}}{\lambda_{\underline{i}}^2}\right),\]
where $\lambda_{\underline{i}}$ denotes the top eigenvalue of $S_{\underline{i}}:=M_{i_n}M_{i_{n-1}}\cdots M_{i_1}$.
However, we will see that this equation is valid only for $2\times 2$ 
matrices with determinant 1. A generalized formula that holds for all positive matrices is given here.
\begin{lemma} (Modified Determinant Lemma)
\label{lemma:modified}
For any $d\times d$ positive matrix $A$, we have
\[\det(I-D\widehat{A}(x))=\prod_{i=2}^d \left(1-\dfrac{\lambda_i}{\lambda_1}\right),\]
where $\lambda_1$ and $x$ denote the maximal simple positive eigenvalue and the associated eigenvector of $A$, and $\lambda_2,\cdots, \lambda_d$ are the other eigenvalues. 
\end{lemma}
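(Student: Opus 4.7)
The plan is to compute $D\widehat{A}(x)$ explicitly and then identify it, up to conjugation, with the restriction of $\lambda_1^{-1}A$ to the $A$-invariant complement of the Perron eigendirection. Note first that the fixed point $x$ of $\widehat{A}$ in $R_+^{d-1}$ is the Perron--Frobenius eigenvector of $A$ normalised so that $x_1=1$, and that the tangent space to $R_+^{d-1}$ at every point can be canonically identified with
\[
    H := \{v\in\mathbb{R}^d : v_1=0\}.
\]
Writing $\widehat{A}(z) = Az/(Az)_1$ and differentiating by the quotient rule at $z=x$ (where $Ax=\lambda_1 x$ and hence $(Ax)_1=\lambda_1$), one finds, for $v\in H$,
\[
    D\widehat{A}(x)\cdot v \;=\; \frac{Av}{\lambda_1} \;-\; \frac{(Av)_1\, x}{\lambda_1} \;=\; \frac{1}{\lambda_1}\,\pi_H(Av),
\]
where $\pi_H:\mathbb{R}^d\to H$ is the projection $w\mapsto w-w_1 x$ onto $H$ with kernel $\mathbb{R}x$. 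So the main object is the linear map $T := \lambda_1^{-1}\,\pi_H\circ A|_H$ on the $(d-1)$-dimensional space $H$.

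Next I would invoke the $A$-invariant decomposition $\mathbb{R}^d = \mathbb{R}x \oplus E'$, where $E'$ is the sum of generalised eigenspaces of $A$ associated with $\lambda_2,\dots,\lambda_d$; this decomposition exists precisely because $\lambda_1$ is a simple eigenvalue (Perron--Frobenius, which applies since $A$ is positive). The key observation is that $\pi_H$ restricted to $E'$ is an isomorphism $\iota:E'\to H$: both spaces have dimension $d-1$, and $\ker\pi_H=\mathbb{R}x$ intersects $E'$ only at the origin. For any $w\in E'$ we have $Aw\in E'$ and so $\pi_H(Aw)=\iota(A|_{E'}w)$; moreover, writing an arbitrary $v\in H$ as $v=\iota(w)=w-w_1 x$ for a unique $w\in E'$, we compute
\[
    T\iota(w) \;=\; \frac{1}{\lambda_1}\pi_H\!\bigl(A(w-w_1 x)\bigr) \;=\; \frac{1}{\lambda_1}\pi_H(Aw) \;=\; \frac{1}{\lambda_1}\iota(A|_{E'}w),
\]
since $\pi_H(w_1\lambda_1 x)=0$. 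Thus $T=\lambda_1^{-1}\,\iota\circ A|_{E'}\circ\iota^{-1}$, so $T$ is similar to $\lambda_1^{-1}\,A|_{E'}$, whose spectrum (with multiplicities) is $\{\lambda_i/\lambda_1\}_{i=2}^{d}$.

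Finally, I would conclude with
\[
    \det\bigl(I - D\widehat{A}(x)\bigr) \;=\; \det\bigl(I_H - T\bigr) \;=\; \det\bigl(I_{E'} - \lambda_1^{-1}A|_{E'}\bigr) \;=\; \prod_{i=2}^{d}\left(1-\frac{\lambda_i}{\lambda_1}\right),
\]
which is the desired identity. The only step that is not entirely routine is setting up the conjugation; the main subtlety is recognising that the natural ``tangent'' projection $\pi_H$ (with kernel $\mathbb{R}x$) is exactly what intertwines the derivative on $H$ with the action of $A$ on the invariant complement $E'$. Everything else is a direct computation, and the $d=2$, $\det A=1$ case of Pollicott's original lemma is recovered from $\lambda_1\lambda_2=\det A=1$, giving $1-\lambda_2/\lambda_1 = 1 - \det A/\lambda_1^2$ as a sanity check.
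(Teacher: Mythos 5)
Your argument is correct, and it takes a genuinely different route from the paper. The paper computes the Jacobian entrywise in the affine chart, obtaining $(D\widehat{A}(x))_{i,j}=(a_{i+1,j+1}-a_{1,j+1}x_{i+1})/\lambda_1$, and then evaluates $\det(I-D\widehat{A}(x))$ by row and column manipulations that tie the $(d-1)\times(d-1)$ determinant to the characteristic polynomial of $A$ divided by $(\lambda-\lambda_1)$ and evaluated at $\lambda=\lambda_1$. You instead show, coordinate-free, that $D\widehat{A}(x)=\lambda_1^{-1}\pi_H\circ A|_H$ is conjugate, via the projection $\pi_H$ along $\mathbb{R}x$, to $\lambda_1^{-1}A|_{E'}$ on the invariant complement $E'$ of the Perron direction; this is consistent with the paper's explicit Jacobian (apply your formula to the basis vectors $e_{j+1}$ to recover it), and it actually proves more than the lemma asks, namely that the full spectrum of $D\widehat{A}(x)$, with multiplicities, is $\{\lambda_i/\lambda_1\}_{i=2}^d$ -- which also makes transparent why $\widehat{A}$ is a contraction near its fixed point. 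The paper's computation buys elementary self-containedness and the explicit Jacobian matrix; your argument buys conceptual clarity and portability (it works verbatim at the fixed points $s_{\underline{i}}$ of the compositions $\widehat{S}_{\underline{i}}$ and for cone-preserving maps). Two small points you should make explicit: since $\lambda_2,\dots,\lambda_d$ may be complex, either complexify ($E'\subset\mathbb{C}^d$, noting that the real determinant equals that of the complexification) or take $E'$ to be the range of the real spectral projection complementary to $\lambda_1$; and the existence of the splitting $\mathbb{R}^d=\mathbb{R}x\oplus E'$ uses that $\lambda_1$ is \emph{algebraically} simple, which Perron--Frobenius does supply for positive matrices but which deserves a word, since geometric simplicity alone would not give an invariant complement.
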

\begin{proof} Let $(1 ~~ t_2 ~~ \cdots ~~ t_d)^T$ be a coordinate system around the point $\bar{x}=(1 ~~ x_2 ~~ \cdots ~~ x_d)^T$, write $\sim$ for the equivalence relation on $\mathbb{R}^d\setminus\{0\}$ given by $\alpha\sim \beta$ if and only if there exists $\lambda\not =0$ such that $\alpha=\lambda \beta$. We have
$$\begin{pmatrix}
  a_{11} & a_{12} & \cdots & a_{1d} \\
  a_{21} & a_{22} & \cdots & a_{2d} \\
 \vdots  & \vdots & \ddots & \vdots \\
  a_{d1} & a_{d2} & \cdots & a_{dd}
 \end{pmatrix}
\begin{pmatrix}
1\\t_2\\ \vdots \\ t_d
\end{pmatrix}=
\begin{pmatrix}
a_{11}+\sum_{j=2}^d a_{1j}t_j\\a_{21}+\sum_{j=2}^d a_{2j}t_j\\ \vdots \\a_{d1}+\sum_{j=2}^d a_{dj}t_j
\end{pmatrix}
\sim \begin{pmatrix}
1\\b_2\\ \vdots \\ b_d
\end{pmatrix} 
$$
where $b_k=\dfrac{a_{k1}+\sum_{j=2}^d a_{kj}t_j}{a_{11}+\sum_{j=2}^d a_{1j}t_j},\quad k=2,3,\cdots, d.$
Therefore, $$\dfrac{\partial b_k}{\partial t_i}=\dfrac{a_{ki}(a_{11}+\sum_{j=2}^d a_{1j}t_j)-a_{1i}(a_{k1}+\sum_{j=2}^d a_{kj}t_j)}{(a_{11}+\sum_{j=2}^d a_{1j}t_j)^2}.$$
At the point $\bar{x}$, the derivative is $\dfrac{\partial b_k}{\partial t_i}\bigg | _{\bar{x}}=\dfrac{a_{ki}-a_{1i}x_k}{\lambda_1}, \quad k,i=2, 3,\cdots, d.$
$$(D\widehat{A}(x))_{i,j}=\dfrac{a_{i+1,j+1}-a_{1,j+1}x_{i+1}}{\lambda_1},\quad i,j=1, 2,\cdots, d-1.$$
So, $$\det(I-D\widehat{A}(x))=\lambda_1^{-(d-1)}\det
\begin{pmatrix}
 \lambda_1- a_{22}+a_{12}x_2 & -a_{23}+a_{13}x_2 & \cdots & -a_{2d}+a_{1d}x_2 \\
  -a_{32}+a_{12}x_3 & \lambda_1-a_{33}+a_{13}x_3 & \cdots & -a_{3d}+a_{1d}x_3 \\
 \vdots  & \vdots & \ddots & \vdots \\
  -a_{d2}+a_{12}x_d & -a_{d3}+a_{13}x_d & \cdots & \lambda_1-a_{dd}+a_{1d}x_d
 \end{pmatrix}$$
Since $a_{11}+\sum_{j=2}^d a_{1j}x_j=\lambda_1$, we have 
\begin{align*}
\prod_{i=1}^d(\lambda-\lambda_i) &=\det
\begin{pmatrix}
 \lambda- a_{11} & -a_{12} & \cdots & -a_{1d} \\
  -a_{21} & \lambda-a_{22} & \cdots & -a_{2d} \\
 \vdots  & \vdots & \ddots & \vdots \\
  -a_{d1} & -a_{d2} & \cdots & \lambda-a_{dd}
 \end{pmatrix}\\
& =\det\begin{pmatrix}
  \lambda-\lambda_1+\sum a_{1j}x_j & -a_{12} & \cdots &- a_{1d} \\
  -a_{21} & \lambda-a_{22} & \cdots & -a_{2d} \\
 \vdots  & \vdots & \ddots & \vdots \\
  -a_{d1} & -a_{d2} & \cdots & \lambda-a_{dd}
 \end{pmatrix}\\
&=\det\begin{pmatrix}
  \lambda-\lambda_1 & -a_{12} & \cdots &- a_{1d} \\
  -a_{21}+\lambda x_2-\sum a_{2j}x_j & \lambda-a_{22} & \cdots & -a_{2d} \\
 \vdots  & \vdots & \ddots & \vdots \\
  -a_{d1}+\lambda x_d-\sum a_{dj}x_j & -a_{d2} & \cdots & \lambda-a_{dd}
 \end{pmatrix}\\
&=\det\begin{pmatrix}
  \lambda-\lambda_1 & -a_{12} & \cdots &- a_{1d} \\
  (\lambda-\lambda_1)x_2 & \lambda-a_{22} & \cdots & -a_{2d} \\
 \vdots  & \vdots & \ddots & \vdots \\
  (\lambda-\lambda_1)x_d & -a_{d2} & \cdots & \lambda-a_{dd}
 \end{pmatrix}\\
&=(\lambda-\lambda_1)^d\det\begin{pmatrix}
  1 & -a_{12} & \cdots &- a_{1d} \\
  x_2 & \lambda-a_{22} & \cdots & -a_{2d} \\
 \vdots  & \vdots & \ddots & \vdots \\
  x_d & -a_{d2} & \cdots & \lambda-a_{dd}
 \end{pmatrix}\\
&=(\lambda-\lambda_1)^d\det\begin{pmatrix}
  1 & -a_{12} & \cdots &- a_{1d} \\
  0 & \lambda-a_{22}+a_{12}x_2 & \cdots & -a_{2d}+a_{1d}x_2 \\
 \vdots  & \vdots & \ddots & \vdots \\
  0 & -a_{d2}+a_{12}x_d & \cdots & \lambda-a_{dd}+a_{1d}x_d
 \end{pmatrix}
\end{align*}
where the third equality follows by multiplying the $i$-th column by $x_i$ and adding it to the first column for each $2\le i\le d$, the fourth follows by noticing $a_{k1}+\sum_{j=2}^d a_{kj}x_j=\lambda_1x_k$ for each $2\le k\le d$, and the last  by multiplying the first row by $-x_i$ and adding it to the $i$-th row for each $2\le i\le d$.

Therefore 
\begin{align*}
\det(I-D\widehat{A}(x))&=\dfrac{1}{\lambda_1^{d-1}}\dfrac{(\lambda-\lambda_1)(\lambda-\lambda_2)\cdots(\lambda-\lambda_d)}{\lambda-\lambda_1}\bigg |_{\lambda=\lambda_1}\\
&=\prod_{i=2}^d\left(1-\dfrac{\lambda_i}{\lambda_1}\right).
\end{align*}
\end{proof}
When $d=2$, since $\lambda_1\lambda_2=\det(A)=1$, we have
$$\det\left(I-D\widehat{A}(x)\right)=1-\dfrac{\lambda_2}{\lambda_1},$$
which coincides with Pollicott's formula. 

\section*{Acknowledgement}
We are grateful to Ian Morris and Christina Goldschmidt for their valuable comments and suggestions as examiners of the first author's PhD thesis \cite{FanThesis}, from which this article is derived. Their feedback helped us to considerably improve the quality and clarity of the presentation. We also appreciate the careful and constructive reports of the anonymous referees, who pointed out several errors and provided useful insights. 

\bibliographystyle{plain}
\bibliography{random_matrices,operator_algebras}

\end{document}